\numberwithin{equation}{section}
\theoremstyle{plain}
\newtheorem{theorem}{Theorem}[section]
\newtheorem{lemma}[theorem]{Lemma}
\newtheorem{remark}[theorem]{Remark}
\numberwithin{equation}{section}
\def\a{\alpha}
\def\b{\beta}
\def\L{\Lambda}
\def\W{\Omega}
\def\ni{\noindent}
\def\rank{{\rm rank\/}}
\def\ind{{\rm ind\/}}
\def\Stab{{\rm Stab\/}}
\def\End{\mathrm{End}}
\def\R{\mathbb{R} }
\def\fg{\mathfrak{g}}
\begin{document}
	
	\title{On the general part of the perturbed Vafa-Witten moduli spaces on 4-manifolds}

	%\authorrunning{Short form of author list} % if too long for running head
	
%	\institute{
%		Ren Guan \at
%		School of Mathematics and Statistics, 
%		Jiangsu Normal University, Xuzhou 221100, China\\
%		\email{guanren@jsnu.edu.cn} 
%	}
	\author{Ren Guan}
	\address{School of Mathematics and Statistics, 
		Jiangsu Normal University, Xuzhou 221100, China}
	\email{guanren@jsnu.edu.cn}

	\begin{abstract}
		In this paper we consider the   Vafa-Witten equations on closed, oriented and smooth 4-manifolds, and construct a set of perturbation terms to establish the transversality of the perturbed   Vafa-Witten equations at the general part of the solutions. Then we show that for a generic choice of the perturbation terms, this part of the moduli space for the structure group $SU(2)$ or $SO(3)$ on a closed 4-manifold is a smooth manifold of dimension zero.

	\end{abstract}

	\subjclass{58D27, 53C07, 81T13}
	
	\keywords{Vafa-Witten equations, Vafa-Witten moduli spaces,  transversality, perturbations, 4-manifolds}

	\maketitle
	
	\tableofcontents
	
	\section{Introduction}
	
	The celebrated Donaldson's polynomial invariants \cite{D1,D2} and Seiberg-Witten invariants \cite{W} are of great importance to low dimensional topology, especially in the research of  the smooth structures of 4-manifolds. For example, we can use both invariants to show that $K3\#\overline{\mathbb{C}\mathrm{P}^2}$ and $\#3\mathbb{C}\mathrm{P}^2\#20\overline{\mathbb{C}\mathrm{P}^2}$ are homeomorphic but not diffeomorphic (see \cite[Theorem A]{D1}, \cite[\S 2, \S 4]{W}). Both invariants are defined by the moduli spaces of solutions to certain equations arised from gauge theory. Donaldson's polynomial invariants are defined via the anti-self-dual (ASD) Yang-Mills equations on 4-manifolds with non-abelian structure group \cite{D1,D2},  and Seiberg-Witten invariants rely on the $U(1)$ monopole equations \cite{Mor,W}.
	
	When study an $N=4$ topologically twisted supersymmetric Yang-Mills theory on 4-manifolds, Vafa and Witten proposed a pair of new gauge equation \cite{VW}, and nowadays we call it \emph{Vafa-Witten equation}.  Vafa-Witten equation has the following form:
	\begin{equation}\left\{
		\begin{aligned}
			&d_A^*B +d_A C=0,\\&F_A^++\frac{1}{8}[B\centerdot B]+\frac{1}{2}[B,C]=0,  \\
		\end{aligned}\right. \label{aa}
	\end{equation}
	where $A$ is a connection on a principal $G$-bundle $P$ over a smooth 4-manifold $X$, $B$ is a self-dual 2-form with values in the adjoint bundle $\fg_P$, and $C$ is a section of $\fg_P$. On closed 4-manifolds, Vafa-Witten equation \eqref{aa} is equivalent to
	\begin{equation}
		\left\{
		\begin{aligned}
			&d_AC=d_A^*B=0\\&F_A^++\frac{1}{8}[B\centerdot B]=[B,C]=0.\\
		\end{aligned}\right.
		\label{ab}
	\end{equation}
	When $G=SU(2)$ or $SO(3)$ and the connection $A$ is irreducible, $d_AC=0$ implies $C=0$ (cf.  \cite[\S 2.1]{Ma}), and then the equation \eqref{ab} is further reduced  to
	\begin{equation}\left\{
		\begin{aligned}
			&d_A^*B=0\\&F_A^++\frac{1}{8}[B\centerdot B]=0.\\
		\end{aligned}\right.
		\label{ac}
	\end{equation}
	We call equation \eqref{ac}  the \emph{reduced Vafa-Witten equation}.
	
	Since Vafa-Witten equation is gauge-equivariant \cite[\S1.3]{Ma}, we can define the \emph{Vafa-Witten moduli space}:
	$$\mathcal{M}_{VW}=\{(A,B,C)\in\mathcal{C}_k(P):\mathcal{VW}(A,B,C)=0\}/\mathcal{G}_{k+1}(P),$$
	i.e., the set of solutions to the equation \eqref{aa} modulo the gauge transformation (See \S2 for details). In general, $\mathcal{M}_{VW}$
	is very complicated because the transversality often fails. According to the values of $B$ and $C$ on the underlying manifold $X$, $\mathcal{M}_{VW}$ can be  divided into three parts:
	$$\mathcal{M}_{VW}=\mathcal{M}_{\text{asd}}\cup\mathcal{M}_{RVW}\cup\widetilde{\mathcal{M}}_{VW}$$
	where
	\begin{align*}
		\mathcal{M}_{\text{asd}}&:=\{[A,B,C]\in\mathcal{M}_{VW}:\text{$B\equiv 0$ and $C\equiv0$ on $X$}\},\\
		\mathcal{M}_{RVW}&:=\{[A,B,C]\in\mathcal{M}_{VW}:\text{$B\not\equiv 0$ and $C\equiv0$ on $X$}\},\\
		\widetilde{\mathcal{M}}_{VW}&:=\{[A,B,C]\in\mathcal{M}_{VW}:\text{$C\not\equiv0$ on $X$}\}.
	\end{align*}
	The space $\mathcal{M}_{\text{asd}}$ is the well-known moduli space of the anti-self-dual (ASD) connections, and has been deeply studied by Donaldson, Uhlenbeck and others\cite{D1,D2,FU}. We call $\mathcal{M}_{RVW}$ the \emph{reduced} part of $\mathcal{M}_{VW}$ and $\widetilde{\mathcal{M}}_{VW}$ the \emph{general} part.

	The Lie group used by Donaldson's invariants is the noncommutative Lie group $SU(2)$, and the moduli spaces are defined by the ASD (anti-self-dual) equations $F_A^+=0$; The Lie group used by Seiberg-Witten invariants is the commutative Lie group $U(1)$, and the Seiberg-Witten invariants depend not only on the connections $A$ but also on the additional fields $\Phi$ \cite{Mor,SW1}. Due to the commutativity of $U(1)$ and \emph{a priori} boundness of the additional field $\Phi$ \cite[\S5.2]{Mor}, the transversality of Seiberg-Witten equations and the compactness of the moduli spaces can be established more easily than the Donaldson's case, and that makes Seiberg-Witten invariants easier to construct and more applicable to the study of differential structures of smooth 4-manifolds. 
	
	The Vafa-Witten equations studied in this paper are also defined on the $SU(2)$- and $SO(3)$-principal bundles. Mares said that if the transversality is established, then the dimensions of the Vafa-Witten moduli spaces are zero \cite[Section 1.1.3]{Ma}, which do not desided by the geometric structure of bundles and the underlying manifolds. Perhaps the invariants defined by Vafa-Witten equations have wider applications.
	
	Before defining the invariants, Donaldson, Seiberg and Witten construct perturbations to the corresponding equations first (see \cite{D1,Mor} for details), the main reason for doing this is that the moduli spaces of solutions to the original equations are not smooth manifolds due to the lack of transversality. This can be settled down by constructing proper perturbations and consider the moduli spaces of solutions to the perturbed equations. After a lot of sophisticated analysis, the invariants are constructed and can be applied to distinguish the differential structures of topologically homeomorphic 4-manifolds.

	In  \cite{Tan}, by applying the method of \cite{Fe}, Tanaka constructs suitable perturbations of Vafa-Witten equations on closed symplectic 4-manifolds, then he shows that if the structure group is $SU(2)$ or $SO(3)$, the moduli spaces are smooth manifolds of dimension zero for generic choices of the perturbation parameters. In this paper, we construct perturbations and establish the transversality of the perturbed   Vafa-Witten equations on closed, oriented and smooth Riemannian 4-manifolds at the general part of the solutions, i.e., the set of solutions $(A,B,C)$ where $A$ is irreducible and $C\not\equiv 0$, then we show that the dimensions of this part of the moduli spaces are zero for  generic perturbations. Our main result is the following theorem.
	
	\begin{theorem}\label{main}Let $(X,g)$ be a closed, oriented and smooth Riemannian 4-manifold, $r>k\geq3$ are two integers. Then there is a first-category subset $\mathcal{T}^r_{fc}\subset\mathcal{T}^r:=C^r(GL(\Lambda^{1}))\times C^r(GL(\Lambda^{2,+}))\times C^r(GL(\Lambda^{2,+}))\times C^r(X,\Lambda^{1})\times C^r(X,\Lambda^{2,+})$ such that for all $(\tau^1,\tau^2,\tau^3,\theta,\gamma)$ in $\mathcal{T}^r-\mathcal{T}^r_{fc}$, the moduli space of the $L_k^2$ solutions $[A,B,C]$  to the equation
	\begin{equation}\left\{
			\begin{aligned}
				&d_A^*B+d_AC+\tau^1\big((B+[B,C])\centerdot\theta+C\otimes\theta\big)=0\\
				&F_A^++\frac{1}{8}[B\centerdot B]+\frac{1}{2}\tau^2[B,C]+\tau^3B+C\otimes\gamma=0\\
			\end{aligned}\right.\end{equation}
	with $A$ irreducible and $C\not\equiv0$ is a smooth manifold of dimension zero.
	\end{theorem}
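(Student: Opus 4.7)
The approach is the standard Sard--Smale universal moduli space technique, in the spirit of Freed--Uhlenbeck and of Tanaka's treatment \cite{Tan} of the symplectic case. The whole point of the restriction to the general part ($A$ irreducible, $C\not\equiv 0$) is that the perturbation terms $C\otimes\theta$ and $C\otimes\gamma$ actively supply missing directions only on the open set where $C\neq 0$.

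First, I would realise the perturbed equation as the zero set of a smooth map
$$\Phi:\mathcal{C}^*_{k,\, C\not\equiv 0}\times\mathcal{T}^r\longrightarrow L^2_{k-1}\bigl(\Lambda^1\otimes\fg_P\bigr)\oplus L^2_{k-1}\bigl(\Lambda^{2,+}\otimes\fg_P\bigr),$$
where $\mathcal{C}^*_{k,\, C\not\equiv 0}\subset\mathcal{C}_k(P)$ is the open subset of configurations with $A$ irreducible and $C\not\equiv 0$. This map is $\mathcal{G}_{k+1}(P)$-equivariant (with the trivial action on $\mathcal{T}^r$), so after fixing a Coulomb slice it descends to a smooth section of a Banach bundle over $\mathcal{B}^*_{k,\, C\not\equiv 0}\times\mathcal{T}^r$; define the universal moduli space as $\mathcal{Z}=\Phi^{-1}(0)/\mathcal{G}_{k+1}(P)$. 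At fixed $t$, the linearisation $D_{(A,B,C)}\Phi|_t$ has the same principal symbol as the unperturbed operator, hence is Fredholm, and the index computation for the Vafa--Witten deformation complex (recorded by Mares \cite{Ma}) gives Fredholm index zero on $SU(2)$- or $SO(3)$-bundles over closed oriented $4$-manifolds.

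The heart of the argument is transversality: showing that the full differential $D\Phi$ is surjective at every point of $\mathcal{Z}$. Suppose $(\eta,\mu)\in L^2_{k-1}(\Lambda^1\otimes\fg_P)\oplus L^2_{k-1}(\Lambda^{2,+}\otimes\fg_P)$ is $L^2$-orthogonal to the image. Testing against variations $\dot\gamma\in C^r(X,\Lambda^{2,+})$ produces, via the term $C\otimes\dot\gamma$, the pointwise constraint
$$\mu(x)\cdot_{\fg_P} C(x)=0\in\Lambda^{2,+}_x\quad\text{for all }x\in X,$$
and analogously $\dot\theta$ forces $\eta\cdot_{\fg_P} C\equiv 0$. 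Varying $\dot\tau^2$, $\dot\tau^3$, and $\dot\tau^1$ in the spaces of $C^r$-endomorphisms of $\Lambda^{2,+}$ and $\Lambda^1$ adds the pointwise constraints that the $\fg_P$-contractions of $\mu$ with $[B,C]$ and with $B$, and the analogous contractions of $\eta$ with $(B+[B,C])\centerdot\theta+C\otimes\theta$, all vanish identically on $X$. On the nonempty open set $U=\{x\in X:C(x)\neq 0\}$, and using that $\fg\cong\R^3$ has a $2$-dimensional orthogonal complement to any nonzero element, these algebraic constraints together with the ellipticity of the formal adjoint $D_{(A,B,C)}\Phi^*$ and the Aronszajn--Cordes unique continuation principle on the connected manifold $X$ force $(\eta,\mu)\equiv 0$ globally.

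With $D\Phi$ surjective along $\mathcal{Z}$, the implicit function theorem makes $\mathcal{Z}$ a smooth Banach manifold, and the projection $\pi:\mathcal{Z}\to\mathcal{T}^r$ is a smooth Fredholm map of index zero. The Sard--Smale theorem then produces a residual set of regular values of $\pi$; its complement $\mathcal{T}^r_{fc}$ is first category, and for any $t\in\mathcal{T}^r\setminus\mathcal{T}^r_{fc}$ the fibre $\pi^{-1}(t)$ is a zero-dimensional smooth manifold, which is precisely the moduli space in the statement. I expect the transversality verification to be the main obstacle: one must show that the five-fold family of perturbations in $\mathcal{T}^r$, once translated through the algebra, spans enough directions in both components on $\{C\neq 0\}$, and then that unique continuation for the elliptic cokernel equation propagates the vanishing over the rest of $X$; this interacts delicately with the Lie bracket structure and is precisely why the restriction $C\not\equiv 0$ (and the specific cocktail of perturbation terms) is needed.
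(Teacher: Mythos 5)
Your overall framework (universal moduli space, orthogonality to the image of the parametrised linearisation, pointwise constraints from the perturbation directions, unique continuation to propagate vanishing off an open set, Sard--Smale) matches the paper's. But there is a genuine gap exactly where you flag uncertainty: you never establish that the perturbation directions span enough of the $\fg_P$ fibre on $\{C\neq 0\}$, and in fact they do not, unless one rules out a degenerate configuration.

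Concretely, the constraints you extract in the second component from varying $\dot\gamma$, $\dot\tau^2$, $\dot\tau^3$ involve the $\fg_P$-contractions of $\mu$ with $C$, with $[B,C]$, and with $B$. If the solution has $B$ pointwise parallel to $C$ in the $\fg_P$ fibre (so $[B,C]\equiv 0$), these three collapse to a single condition $\langle\mu,C\rangle_{\fg_P}=0$, leaving a $2$-dimensional pointwise kernel in $\mathfrak{su}(2)\cong\R^3$ --- which is precisely the $2$-dimensional orthogonal complement you mention, not a contradiction. Your argument provides no mechanism to exclude $[B,C]\equiv 0$; the condition $C\not\equiv 0$ alone is insufficient. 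The paper's essential ingredient, which your proposal omits, is Lemma~\ref{xzz}: for solutions of the perturbed equation with $C\not\equiv 0$ and $[B,C]\equiv 0$, the connection $A$ must be reducible. The proof of this is nontrivial: one shows $(B,C)$ satisfies unique continuation so that $\{B\neq 0\}\cap\{C\neq 0\}$ is open and dense, that $B$ has rank $\le 1$ so $B=\xi\otimes\omega$ and $C=|C|\xi$ with $|\xi|=1$, deduces $d_A\xi=0$ locally via the algebraic identity of Lemma~\ref{xxx}, and then propagates $d_A\xi=0$ over all of $X$ by the Agmon--Nirenberg ODE-in-Hilbert-space unique continuation theorem applied to the radial-gauge evolution $\frac{d\mathfrak{A}}{dr}=*_{g_r}F_{\mathfrak{A}(r)}$ on geodesic spheres. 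Only with Lemma~\ref{xzz} in hand, restricting to irreducible $A$, does one know $[B,C]\not\equiv 0$, hence (by Lemma~\ref{xx}) that $B$, $C$, $[B,C]$ span $\mathfrak{su}(2)$ on some open set $V$, forcing $\psi\equiv 0$ there; the first-component argument via $\dot\tau^1$ and the rank-$3$ Lemma~\ref{xxy} then kills $\phi$ on $V$, after which Aronszajn finishes. Your proposal reaches the right shape of the argument but is missing the reducibility dichotomy and its unique-continuation proof, which is the paper's core technical contribution.

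A minor secondary omission: the paper must also excise the first-category set $\mathcal{T}^r_1$ of perturbations with $\theta$ vanishing on some open set, since the rank-$3$ Lemma~\ref{xxy} requires $\theta\neq 0$; your proposal does not account for this.
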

	
	Note that the Lie algebra of $SU(2)$ and $SO(3)$ are isomorphic: $\mathfrak{su}(2)\cong\mathfrak{so}(3)$, so in the following, $\mathfrak{so}(3)$ is replaced by $\mathfrak{su}(2)$  wherever it appears.
	
	\section{Vafa-Witten equations and their perturbations}
	
	In this section we lay out the general set-up of Vafa-Witten equations and construct the perturbations for establishing the transversality. 
	
	\subsection{The Vafa-Witten map}
	
	Let $(X,g)$ be a closed, oriented and smooth Riemannian 4-manifold, $P\to X$ a principle $SU(2)$- or $SO(3)$-bundle on $X$ and $\mathcal{A}_k (P)$ the space of $L_k^2$ connections where $k\geq 3$ is an integer. $\mathcal{A}_k (P)$ is an affine space, so for any fixed $L_k^2$ connection $A_0$, we have
	$$\mathcal{A}_k (P)=A_0+L_k^2(X,\mathfrak{su}(2)_P\otimes\Lambda^1).$$
	Then we define the \emph{configuration spaces} and \emph{target spaces}
	$$\begin{aligned}
		\mathcal{C}_k(P)&:=\mathcal{A}_k (P)\times L_k^2(X,\mathfrak{su}(2)_P\otimes\Lambda^{2,+})\oplus L_k^2(X, \mathfrak{su}(2)_P),\\
		\mathcal{C}_{k-1}'(P)&:=L_{k-1}^2(X,\mathfrak{su}(2)_P\otimes\Lambda^1)\oplus L_{k-1}^2(X,\mathfrak{su}(2)_P\otimes\Lambda^{2,+}).
	\end{aligned}$$
	It's easy to see that $\mathcal{C}_k(P)$ is also an affine vector space, which means for any fixed $L_k^2$ triple $(A_0,B_0,C_0)\in\mathcal{C}_k(P)$, we have
	$$\mathcal{C}_k(P)=(A_0,B_0,C_0)+L_k^2(X,\mathfrak{su}(2)_P\otimes\Lambda^1)\oplus L_k^2(X,\mathfrak{su}(2)_P\otimes\Lambda^{2,+})\oplus
	L_k^2(X,\mathfrak{su}(2)_P).$$
	
	The \emph{Vafa-Witten map} is defined as
	\begin{equation}\begin{aligned}
			\mathcal{VW}&:\mathcal{C}_k(P)\to\mathcal{C}_{k-1}'(P),\\
			\mathcal{VW}(A,B,C)&:=\begin{pmatrix}
				d_A^*B+d_AC\\
				F_A^++\frac{1}{8}[B\centerdot B]+\frac{1}{2}[B,C]\\
			\end{pmatrix}.
		\end{aligned} \label{cc}
	\end{equation}
	The action of an $L_{k+1}^2$ gauge transformation $\zeta\in\mathcal{G}_{k+1}(P)$ on any $(A,B,C)\in\mathcal{C}_k(P)$ is given by
	$$\zeta\cdot(A,B,C):=(A-(d_A\zeta^{-1})\zeta,\zeta^{-1}B\zeta,\zeta^{-1}C\zeta),$$
	and a connection $A$ is called \emph{irreducible} if $\Stab(A):=\{\zeta\in\mathcal{G}_{k+1}(P):\zeta\cdot A=A\}=Z(G)$, the center of $G$. 
	
	It's not hard to see that
	$$\mathcal{VW}(\zeta\cdot(A,B,C))=\begin{pmatrix}
		\zeta^{-1}(d_A^*B+d_AC)\zeta\\
		\zeta^{-1}(F_A^++\frac{1}{8}[B\centerdot B]+\frac{1}{2}[B,C])\zeta\\
	\end{pmatrix},$$
	so the map $\mathcal{VW}$ is gauge-equivariant (cf.  \cite[\S 3.2.1]{Ma}). Define the quotient space $\mathcal{B}_k (P):=\mathcal{C}_k (P)/\mathcal{G}_{k+1}(P)$ (cf. \cite[Propostion 2.8]{FL},  \cite[Theorem 3.2.3]{Ma}).
	
	Denote by $\mathcal{C}_k^\diamond (P)$ the triples $(A,B,C)\in\mathcal{C}_k(P)$ where $A$ is irreducible and $C\not\equiv 0$, and we call it the \emph{general} part of $\mathcal{C}_k(P)$. The slice theorem (cf.  \cite[Propostion 2.8]{FL} \cite[Theorem 3.2.3]{Ma}) implies that the quotient space $\mathcal{B}_k^\diamond (P):=\mathcal{C}_k^\diamond (P)/\mathcal{G}_{k+1}(P)\subset\mathcal{B}_k(P)$ is an open and smooth Hilbert submanifold of $\mathcal{B}_k(P)$. 
	
	\subsection{The perturbed   Vafa-Witten map}
	
	We now construct the \emph{perturbed   Vafa-Witten map} as follows:
\begin{equation}\begin{aligned}
		\mathcal{VW}&:\mathcal{T}^r\times\mathcal{C}_k(P)\to\mathcal{C}_{k-1}'(P),\\
		\mathcal{VW}(\tau^1,\tau^2,\tau^3,\theta,\gamma,A,B,C)&:=\begin{pmatrix}
			d_A^*B+d_AC+\tau^1\big((B+[B,C])\centerdot\theta+C\otimes\theta\big)\\
			F_A^++\frac{1}{8}[B\centerdot B]+\frac{1}{2}\tau^2[B,C]+\tau^3B+C\otimes\gamma
		\end{pmatrix}, \label{cd}
	\end{aligned}
\end{equation}
where $\mathcal{T}^r:=C^r(GL(\Lambda^{1}))\times C^r(GL(\Lambda^{2,+}))\times C^r(GL(\Lambda^{2,+}))\times C^r(X,\Lambda^{1})\times C^r(X,\Lambda^{2,+})$ is the Banach manifold of $C^r$ perturbation parameters $(\tau^1,\tau^2,\tau^3,\theta,\gamma)$ (with $r$ large enough, say $r>k$). The gauge group $\mathcal{G}_{k+1}(P)$ acts trivially on the space of perturbations $\mathcal{T}^r$, so the map $\mathcal{VW}$ is also gauge-equivariant, and $\mathcal{PM}_k(P):=\mathcal{VW}^{-1}(0)/\mathcal{G}_{k+1}(P)\subset\mathcal{T}^r\times\mathcal{B}_k(P)$ is the parametrized moduli space of the perturbed   Vafa-Witten equation. Let $\mathcal{PM}_k^\diamond(P)=\mathcal{PM}_k(P)\cap(\mathcal{T}^r\times\mathcal{B}_k^\diamond(P))$. Note that the gauge-equivariant map $\mathcal{VW}$ defines a section of a Banach vector bundle $\overline{\mathcal{E}}_k$ over $\mathcal{T}^r\times\mathcal{B}_k^\diamond(P)$ with total space $\overline{\mathcal{E}}_k:=(\mathcal{T}^r\times\mathcal{C}_k^\diamond(P))\times_{\mathcal{G}_{k+1}(P)}\mathcal{C}'_{k-1}(P)$. In particular, the parametrized moduli space $\mathcal{PM}_k^\diamond(P)$ is the zero set of the section $\mathcal{VW}(\cdot,\cdot)$ of the vector bundle $\overline{\mathcal{E}}_k$ over $\mathcal{T}^r\times\mathcal{B}_k^\diamond(P)$. 
	
	\subsection{The Kuranishi complex}
	
	Recall that the deformation complex associated to the ASD connection $A$ is given by
	$$0\to\Omega^0(X,\mathfrak{g}_P)\xrightarrow{d_A}\Omega^1(X, \mathfrak{g}_P)\xrightarrow{d^+_A}\Omega^{2,+}(X, \mathfrak{g}_P)\to 0.$$
	The first differential $d_A$ is the infinitesimal action of the gauge transformation and the second differential $d_A^+$ is the linearization of the equation $F_A^+=0$. The cohomology groups $H_A^\bullet$  each have a geometric interpretation: $H_A^0=\mathrm{Ker}~d_A$ is zero if and only if $A$ is irreducible, $H^1_A=\mathrm{Ker}~d^+_A/\mathrm{Im}~d_A$ is the formal tangent space $T_{[A]}\mathcal{M}_{ASD}$, here $[A]$ denotes the equivalent class of $A$ under the gauge transformation, $H_A^2=\mathrm{Coker}~d_A^+$ is zero if and only if $d_A^+:\Omega^1(X, \mathfrak{g}_P)\to\Omega^{2,+}(X, \mathfrak{g}_P)$ is surjective, or equivalently,  the map $A\mapsto F_A^+$ vanishes transversely at $A$. The sum of $d_A^*:\Omega^1(X,\mathfrak{g}_P)\to\Omega^0(X,\mathfrak{g}_P)$ and $d_A^+:\Omega^1(X,\mathfrak{g}_P)\to\Omega^{2,+}(X, \mathfrak{g}_P)$:
	$$d_A^*+d_A^+:\Omega^1(X, \mathfrak{g}_P)\to\Omega^0(X, \mathfrak{g}_P)\oplus\Omega^{2,+}(X,\mathfrak{g}_P)$$
	is elliptic, which is crucial to the construction of Donaldson's polynomial  invariants.
	
	For each chosen perturbation parameters $(\tau^1,\tau^2,\tau^3,\theta,\gamma)$, the associated deformation complex of the perturbed   Vafa-Witten map is
	$$\begin{aligned}
		0\to& L^2_{k+1}(X,\Lambda^0\otimes\mathfrak{su}(2)_P)\xrightarrow{d^0_{(A,B,C)}}L^2_k (X,\mathfrak{su}(2)_P\otimes\Lambda^1)\oplus L^2_k(X,\mathfrak{su}(2)_P\otimes\Lambda^{2,+})\oplus\\
		&L^2_k(X,\Lambda^0\otimes\mathfrak{su}(2)_P)\xrightarrow{d^1_{(A,B,C)}} L^2_{k-1}(X,\mathfrak{su}(2)_P\otimes\Lambda^1)\oplus L^2_{k-1}(X,\mathfrak{su}(2)_P\otimes\Lambda^{2,+})\to 0,
	\end{aligned}$$
	where
	\begin{equation}
		d^0_{(A,B,C)}(\xi)=(d_A\xi, [B, \xi],[C, \xi]) 
		\label{cf} \end{equation}
	is the linearization of the action of gauge group, and
	\begin{equation} \begin{aligned}
			&d^1_{(A,B,C)}(a,b,c)=(D\mathcal{VW})_{(A,B,C)}(a,b,c) \\
			&=\begin{pmatrix}
				d^*_Ab+d_Ac-[B\centerdot a]-[C,a]+\tau^1\big((b+[b,C]+[B,c])\centerdot\theta+c\otimes\theta\big)\\
				d_A^+a+\frac{1}{4}[b\centerdot B] +\frac12 \tau^2[b,C] +\frac12 \tau^2[B,c]+\tau^3 b+c\otimes\gamma
			\end{pmatrix}
		\end{aligned} \label{cg}
	\end{equation}
	is the linearization of the perturbed   Vafa-Witten maps. 
	Also we have (cf. \cite{Fe, Ma})
	$$\begin{aligned}
		d^1_{(A,B,C)}\circ d^0_{(A,B,C)}(\xi)=\begin{pmatrix}
			[\xi,d_A^*B+d_AC+\tau^1\big((B+[B,C])\centerdot\theta+C\otimes\theta\big)]\\
			[\xi,F_A^++\frac{1}{8}[B\centerdot B]+\frac12\tau^2 [B,C]+\tau^3B+C\otimes\gamma]\end{pmatrix}.
	\end{aligned}$$
	Therefore $d^1_{(A,B,C)}\circ d^0_{(A,B,C)}=0$ if and only if $\mathcal{VW}(\tau^1,\tau^2,\tau^3,\theta,\gamma,A,B,C)=0$, i.e. the sequence is a complex if and only if $[A,B,C]\in\widetilde{\mathcal{M}}_{P,VW}(\tau^1,\tau^2,\tau^3,\theta,\gamma):=\{[A,B,C]\in\mathcal{B}_k(P):\mathcal{VW}(\tau^1,\tau^2,\tau^3,\theta,\gamma,A,B,C)=0\}$.
	
	The $L^2$ adjoint of $d^0_{(A,B,C)}$ is (cf. \cite{Ma})
	$$d^{0,*}_{(A,B,C)}(a,b,c)=d_A^*a+[b\cdot B]+[c,C],$$
	and the combined opertor
	$$\begin{aligned}
		\mathcal{D}_{(A,B,C)}&=d^{1}_{(A,B,C)}+ d^{0,*}_{(A,B,C)}:\begin{matrix} L^2_k(X,\mathfrak{su}(2)_P\otimes\Lambda^1) \\ \oplus \\ L^2_k(X,\mathfrak{su}(2)_P\otimes\Lambda^{2,+}) \\ \oplus \\ L^2_k (X, \mathfrak{su}(2)_P)\end{matrix}\to\begin{matrix} L^2_{k-1}(X,\mathfrak{su}(2)_P\otimes\Lambda^1) \\ \oplus \\ L^2_{k-1}(X,\mathfrak{su}(2)_P\otimes\Lambda^{2,+}) \\ \oplus \\ L^2_{k-1}(X,\mathfrak{su}(2)_P)\end{matrix}
	\end{aligned}$$
	differs from the following operator 
	\begin{equation} \mathcal{D}_A =\left(
		\begin{array}{ccc}
			0 & d_A^* & d_A \\
			d_A^+ & 0 & 0 \\
			d_A^* & 0 & 0 \\
		\end{array}
		\right):\begin{pmatrix} a \\ b \\ c \end{pmatrix} \mapsto
		\begin{pmatrix} d_A^* b +d_A c \\ d_A^+ a \\ d_A^* a \end{pmatrix}
		\label{cj} \end{equation}
	by zeroth-order terms. The Sobolev multiplication theorem and the Rellich embedding theorem imply that the Sobolev multiplication map $L_k^2\times L_k^2\to L_k^2$ is continuous and the inclusion $L_k^2\hookrightarrow L_{k-1}^2$ is compact when $k\geq 3$, so we have $\ind(\mathcal{D}_{(A,B,C)})=\ind(\mathcal{D}_A)=0$, the second equality is derived from the self-adjointness of elliptic operator $\mathcal{D}_A$. 
	
	The above complex is an elliptic deformation complex for the perturbed   Vafa-Witten equation with cohomology groups
	$$H_{(A,B,C)}^0:=\mathrm{Ker}d^{0}_{(A,B,C)},H_{(A,B,C)}^1:=\mathrm{Ker}d^{1}_{(A,B,C)}/\mathrm{Im}~d^{0}_{(A,B,C)},H_{(A,B,C)}^2:=\mathrm{Coker}d^{1}_{(A,B,C)}.$$
	Similarly, $H_{(A,B,C)}^0$ is the Lie algebra of the stabilizer of the triple $(A,B,C)$,  and $H_{(A,B,C)}^0=0$ if the stabilizer of $(A,B,C)$ is $Z(G)$, which means $A$ is irreducible, $H_{(A,B,C)}^1$ is the tangent space $T_{[A,B,C]}\widetilde{\mathcal{M}}_{P,VW}(\tau^1,\tau^2,\tau^3,\theta,\gamma)$ and if  $H_{(A,B,C)}^2=0$, then $\mathrm{Coker}(D\mathcal{VW})_{(A,B,C)}=0$ and $[A,B,C]$ is a regular point of $\widetilde{\mathcal{M}}_{P,VW}(\tau^1,\tau^2,\tau^3,\theta,\gamma)$. We will prove later that every point of  $\widetilde{\mathcal{M}}_{P,VW}^*(\tau^1,\tau^2,\tau^3,\theta,\gamma):=\widetilde{\mathcal{M}}_{P,VW}(\tau^1,\tau^2,\tau^3,\theta,\gamma)\cap\mathcal{B}_k^\diamond(P)$ is regular and hence $\widetilde{\mathcal{M}}_{P,VW}^*(\tau^1,\tau^2,\tau^3,\theta,\gamma)$ is a smooth manifold of dimension $\ind(\mathcal{D}_A)=0$, which is precisely the Theorem \ref{main} says.

	\section{Quadratic expansion of the perturbed   Vafa-Witten map}
	
	We now find the quadratic expansion of $\mathcal{VW}(\tau^1,\tau^2,\tau^3,\theta,\gamma,A,B,C)$ with a fixed perturbation parameter to build the regularity result (see  \cite[\S3]{FL} for the perturbed $PU(2)$-monopole equations and  \cite[\S3.2.2]{Ma} for Vafa-Witten equations). 
	Let $(A_0,B_0,C_0)\in\mathcal{C}_k(P)$ be a fixed smooth triple and $(a,b,c)\in L_k^2(X,\mathfrak{su}(2)_P\otimes\Lambda^1)\oplus L_k^2(X,\mathfrak{su}(2)_P\otimes\Lambda^{2,+})\oplus L_k^2(X,\mathfrak{su}(2)_P)$, then
	\begin{align*}
		&\mathcal{VW}(\tau^1,\tau^2,\tau^3,\theta,\gamma,A_0+a,B_0+b,C_0+c)\\
		=&\begin{pmatrix}d_{A_0+a}^*(B_0+b)+d_{A_0+a}(C_0+c)+\tau^1\big((B_0+b+[B_0+b, C_0+c])\centerdot\theta+(C_0+c)\wedge\theta\big)\\
			F_{A_0+a}^++\frac{1}{8}[B_0+b\centerdot B_0+b]+\frac{1}{2}\tau^2[B_0+b,C_0+c]+\tau^3(B_0+b)+(C_0+c)\otimes\gamma\end{pmatrix}\\
		=&\mathcal{VW}(\tau^1,\tau^2,\tau^3,\theta,\gamma,A_0,B_0,C_0)+d_{(A_0,B_0,C_0)}^1(a,b,c)\\
		&+\begin{pmatrix}-[a\centerdot b]+[a,c]+\tau^1([b, c]\centerdot\theta)\\
			\frac{1}{2}[a\wedge a]^++\frac{1}{8}[b\centerdot b]+\frac{1}{2}\tau^2[b,c]\end{pmatrix} \\
		=&\mathcal{VW}(\tau^1,\tau^2,\tau^3,\theta,\gamma,A_0,B_0,C_0)+d_{(A_0,B_0,C_0)}^1(a,b,c)+\{(a,b,c),(a,b,c)\},
	\end{align*}
	where
	$$\begin{aligned}
		\{(a,b,c),(a,b,c)\}:=\begin{pmatrix}-[a\centerdot b]+[a,c]+\tau^1([b,c]\centerdot\theta)\\
			\frac{1}{2}[a\wedge a]^++\frac{1}{8}[b\centerdot b]+\frac{1}{2}\tau^2[b,c]\end{pmatrix}.
	\end{aligned}$$
	Given $(A_0,B_0,C_0)\in\mathcal{C}_k(P)$ and $(u_0,v_0)\in\mathcal{C}_{k-1}'(P)$, consider the inhomogeneous equation
	\begin{equation}\label{pvw}
		\mathcal{VW}(\tau^1,\tau^2,\tau^3,\theta,\gamma,A_0+a,B_0+b,C_0+c)=(u_0,v_0)
	\end{equation}
	for the triplets $(a,b,c)$. To make the equation elliptic, we impose the gauge-fixing condition
	\begin{equation}\label{gf}
		d^{0,*}_{(A,B,C)}(a,b,c)=w,
	\end{equation}
	then combine \eqref{pvw} and \eqref{gf}, we get an elliptic equation:
	\begin{equation} \mathcal{D}_{(A_0,B_0,C_0)}(a,b,c)+\{(a,b,c),(a,b,c)\}=(w,u,v)
		\label{ccc}
	\end{equation}
	where $(u,v)=(u_0,v_0)-\mathcal{VW}(\tau^1,\tau^2,\tau^3,\theta,\gamma,A_0,B_0,C_0)$. Equation \eqref{ccc} is called \emph{the perturbed   Vafa-Witten equation under Coulomb gauge}.
	
	Mimicking the regular results of  \cite[\S3]{FL} and  \cite[\S3.3]{Ma}, we have
	\begin{theorem} \label{tj} {\bf (Global estimate for $L_1^2$ solutions to the inhomogeneous perturbed   Vafa-Witten plus Coulomb slice equations, cf.  \cite[Corollary 3.4]{FL}, \cite[Theorem 3.3.1]{Ma})\/}
		Let $(X,g)$ be a closed, oriented and smooth Riemannian 4-manifold, $P\to X$ a principle $SU(2)$- or $SO(3)$-bundle on $X$ and let $(A_0,B_0,C_0)$ be a $C^{\infty}$ configuration in $\mathcal{C}(P)$. Then there is a positive constant $\epsilon=\epsilon(A_0,B_0,C_0)$ such that if $(a,b,c)$ is an $L_1^2$ solution to the  equation \eqref{ccc}, where $(w,u,v)$ is in $L_k^2$ and $||(a,b,c)||_{L^4(X)}<\epsilon$, and $k\geq 3$ is an integer, then $(a,b,c)\in L_{k+1}^2$ and there is a polynomial $Q_k(x,y)$, with positive real coefficients, depending at most on $(A_0,B_0,C_0),k$ such that $Q_k(0,0)=0$ and
		$$||(a,b,c)||_{L_{k+1,A_0}^2(X)}\leq Q_k\Big(||(w,u,v)||_{L_{k,A_0}^2(X)},||(a,b,c)||_{L^2(X)}\Big).$$
		In particular, if $(w,u,v)$ is in $C^r$ then $(a,b,c)$ is in $C^{r+1}$. 
	\end{theorem}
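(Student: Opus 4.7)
The plan is to view the equation (\ref{ccc}) as an equation for the self-adjoint first-order elliptic operator $\mathcal{D}_{A_0}$, namely
$$\mathcal{D}_{A_0}(a,b,c)\;=\;(w,u,v)-R(a,b,c)-\{(a,b,c),(a,b,c)\},$$
where $R:=\mathcal{D}_{(A_0,B_0,C_0)}-\mathcal{D}_{A_0}$ is a zeroth-order linear operator whose coefficients are algebraic expressions in the smooth reference fields $B_0,C_0$ and in the $C^r$ perturbation data $(\tau^1,\tau^2,\tau^3,\theta,\gamma)$. Elliptic regularity for $\mathcal{D}_{A_0}$ supplies the standard a priori inequality $\|\phi\|_{L^2_{j+1}}\leq C_j(\|\mathcal{D}_{A_0}\phi\|_{L^2_j}+\|\phi\|_{L^2})$ for every $j\geq 0$. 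I will iterate this estimate together with the Sobolev multiplication and embedding theorems on the closed $4$-manifold $X$ in order to climb from $L^2_1$ up to $L^2_{k+1}$.

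First I would handle the base case, upgrading $L^2_1$ to $L^2_2$; this is where the $L^4$-smallness hypothesis must be spent. Since $L^2_1\hookrightarrow L^4$ in dimension four, H\"older's inequality gives $\|\{(a,b,c),(a,b,c)\}\|_{L^2}\leq C\|(a,b,c)\|_{L^4}^2$, but to control the quadratic term in $L^2_1$ one cannot appeal directly to $L^2_1\cdot L^2_1\to L^2_1$. Instead, one estimates mixed products of the form $(a,b,c)\cdot\nabla(a,b,c)$ using H\"older combined with intermediate $L^p$-elliptic regularity for $\mathcal{D}_{A_0}$, producing an inequality of the shape
$$\|(a,b,c)\|_{L^2_2}\;\leq\;C\|(w,u,v)\|_{L^2_1}+C\|(a,b,c)\|_{L^2}+C\|(a,b,c)\|_{L^4}\cdot\|(a,b,c)\|_{L^2_2}.$$
Choosing $\epsilon=\epsilon(A_0,B_0,C_0)$ so that $C\epsilon\leq\tfrac12$ permits absorption of the last term on the right into the left-hand side, yielding an $L^2_2$-bound of the form $Q_1(\|(w,u,v)\|_{L^2_1},\|(a,b,c)\|_{L^2})$ with $Q_1(0,0)=0$.

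The inductive step for $j\geq 2$ is then clean: given $(a,b,c)\in L^2_j$ with a polynomial bound, the Sobolev multiplication $L^2_j\times L^2_j\to L^2_j$ on a closed $4$-manifold (immediate for $j\geq 3$ via $L^2_j\hookrightarrow L^\infty$, with a minor refinement at $j=2$) yields $\|\{(a,b,c),(a,b,c)\}\|_{L^2_j}\leq C\|(a,b,c)\|_{L^2_j}^2$, while the smoothness of the coefficients of $R$ together with $r>k$ gives $\|R(a,b,c)\|_{L^2_j}\leq C\|(a,b,c)\|_{L^2_j}$. Substituting into the equation places $\mathcal{D}_{A_0}(a,b,c)$ in $L^2_j$, and the elliptic estimate lifts $(a,b,c)$ to $L^2_{j+1}$ with a new polynomial bound $Q_j$ assembled from $Q_{j-1}$. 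Iterating from $j=2$ up to $j=k$ produces the claimed $L^2_{k+1}$-regularity and the polynomial estimate $Q_k$; the $C^{r+1}$ conclusion then follows by further bootstrapping together with Sobolev embedding once $(w,u,v)\in C^r$. The hard part will be the base case: since the naive multiplication $L^2_1\cdot L^2_1\to L^2$ does not recover a derivative, only the $L^4$-smallness together with the absorption trick unlocks the first gain of regularity and sets the whole iteration in motion.
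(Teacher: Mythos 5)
The paper does not actually supply a proof of Theorem~\ref{tj}; it invokes it by direct analogy with \cite[Corollary~3.4]{FL} and \cite[Theorem~3.3.1]{Ma}, and your outline reconstructs exactly the bootstrapping strategy used there: split $\mathcal{D}_{(A_0,B_0,C_0)}$ as the model elliptic operator $\mathcal{D}_{A_0}$ plus a smooth zeroth-order remainder, move the quadratic term to the right-hand side, use the $L^4$-smallness to absorb it in the critical first step, and then iterate elliptic regularity with Sobolev multiplication. So the route is essentially the same as what the paper is citing, and the global structure is sound.

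One place where you should be more careful is the base step $L^2_1\Rightarrow L^2_2$. The inequality you display,
$$\|(a,b,c)\|_{L^2_2}\leq C\|(w,u,v)\|_{L^2_1}+C\|(a,b,c)\|_{L^2}+C\|(a,b,c)\|_{L^4}\cdot\|(a,b,c)\|_{L^2_2},$$
already presupposes that $\|(a,b,c)\|_{L^2_2}<\infty$, so absorption alone gives an \emph{a priori} bound but not membership in $L^2_2$. To actually establish the regularity one first has to run the same scheme in $L^p_1$ for $2<p<4$ (or mollify), where $L^p_1\hookrightarrow L^{q}$ with $q>4$ makes the quadratic term genuinely subcritical, and then pass to $L^2_2$. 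You flag this with the phrase ``intermediate $L^p$-elliptic regularity,'' which is the right mechanism, but as written it is a gesture rather than an argument. Similarly, at $j=2$ the multiplication $L^2_2\times L^2_2\to L^2_2$ fails on a $4$-manifold (borderline Sobolev), so the ``minor refinement'' you allude to is in fact the same interpolation/absorption device rather than an algebra property; only from $j\geq3$ onward does $L^2_j$ become a Banach algebra and the induction become routine. If you fill in those two steps, and note that the zeroth-order operator $R$ has coefficients that are $C^{r}$ with $r>k$ so it preserves $L^2_j$ for all $j\leq k$, the argument matches the cited proofs.
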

	
	\begin{theorem} \label{tk}
		{\bf  (Global regularity of $L_k^2$ solutions to the perturbed   Vafa-Witten equations for $k\geq 2$, cf.  \cite[Proposition 3.7]{FL}, \cite[Theorem 3.3.2]{Ma})} Let $(X,g)$ be a closed, oriented and smooth Riemannian 4-manifold and $P\to X$ a principle $SU(2)$- or $SO(3)$-bundle on $X$. Let $k\geq 3$ be an integer and suppose that $(A,B,C)$ is an $L_k^2$ solution to $\mathcal{VW}(\tau^1,\tau^2,\tau^3,\theta,\gamma,A,B,C)=0$ for fixed $C^r$ perturbation parameters $(\tau^1,\tau^2,\tau^3,\theta,\gamma)$, then there is a gauge transformation $\zeta\in L_{k+1}^2(\mathcal{G}_P)$ such that $\zeta\cdot (A,B,C)$ is $C^\infty$ over $X$.
	\end{theorem}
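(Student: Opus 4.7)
My plan is a standard Uhlenbeck-type gauge-fixing plus elliptic bootstrap, with Theorem \ref{tj} supplying the crucial nonlinear estimate.

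First, I would approximate $(A,B,C)$ by a smooth configuration $(A_0,B_0,C_0)\in\mathcal{C}(P)$; since $k\geq 3$, density of $C^\infty$ in $L^2_k$ together with the continuous inclusions $L^2_k\hookrightarrow L^\infty\cap L^4$ let me make the approximation arbitrarily small simultaneously in the $L^2_k$ and $L^4$ topologies. Then I would invoke the slice theorem (cf.\ \cite[Proposition 2.8]{FL}, \cite[Theorem 3.2.3]{Ma}) to produce a gauge transformation $\zeta\in L^2_{k+1}(\mathcal{G}_P)$ close to the identity such that, writing $\zeta\cdot(A,B,C)=(A_0+a,B_0+b,C_0+c)$, the deviation $(a,b,c)$ sits in the Coulomb slice through $(A_0,B_0,C_0)$:
$$d^{0,*}_{(A_0,B_0,C_0)}(a,b,c)=0.$$
Continuity of the slice map in the $L^2_k$ topology combined with the embedding $L^2_k\hookrightarrow L^4$ ensures that by shrinking the distance between $(A,B,C)$ and $(A_0,B_0,C_0)$ I can arrange the smallness bound $||(a,b,c)||_{L^4(X)}<\epsilon$, where $\epsilon=\epsilon(A_0,B_0,C_0)$ is the constant from Theorem \ref{tj}.

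Because $\mathcal{VW}(\tau^1,\tau^2,\tau^3,\theta,\gamma,\cdot)$ is gauge-equivariant, the transformed triple still solves the perturbed Vafa-Witten equation, and the quadratic expansion displayed just above Theorem \ref{tj} therefore shows that $(a,b,c)$ satisfies the Coulomb-slice system \eqref{ccc}
$$\mathcal{D}_{(A_0,B_0,C_0)}(a,b,c)+\{(a,b,c),(a,b,c)\}=(0,u,v),$$
with $(u,v)=-\mathcal{VW}(\tau^1,\tau^2,\tau^3,\theta,\gamma,A_0,B_0,C_0)$. Since $(A_0,B_0,C_0)$ is smooth and the perturbation parameters are $C^r$, the right-hand side is of class $C^r$, in particular lies in $L^2_k$. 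Theorem \ref{tj} now applies and yields $(a,b,c)\in L^2_{k+1}$; the final $C^{r+1}$-clause of the same theorem then upgrades $(a,b,c)$ to $C^{r+1}$, so that $\zeta\cdot(A,B,C)$ is as smooth as the perturbation data allow, and in particular $C^\infty$ when the $\tau^i$'s, $\theta$ and $\gamma$ are taken smooth.

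The one nontrivial point is the quantitative slice step: one needs the Coulomb gauge-fixing map to be continuous in an $L^4$-sensitive way in order to match the $\epsilon(A_0,B_0,C_0)$ produced by Theorem \ref{tj}. In the ASD and $PU(2)$-monopole settings this is handled by a small-$L^4$ implicit function theorem argument (cf.\ \cite[\S2]{FL}), and the same argument carries over here essentially verbatim because the perturbation terms in \eqref{cd} are zeroth-order and enter linearly in the deviation $(a,b,c)$, so they do not affect the ellipticity of $\mathcal{D}_{(A_0,B_0,C_0)}$ nor the Sobolev multiplication structure of the nonlinearity $\{\cdot,\cdot\}$.
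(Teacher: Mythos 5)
The paper does not actually supply a proof of this theorem: it only remarks that the result follows by ``mimicking'' \cite[\S3]{FL} and \cite[\S3.3]{Ma}, so the appropriate comparison is with the argument in those references. Your outline is the standard one there and is essentially correct: approximate $(A,B,C)$ in $L^2_k$ by a smooth reference triple $(A_0,B_0,C_0)$, use the slice theorem to place $\zeta\cdot(A,B,C)$ into the Coulomb slice $\ker d^{0,*}_{(A_0,B_0,C_0)}$ with a small deviation $(a,b,c)$, observe that $(a,b,c)$ then solves \eqref{ccc} with $w=0$ and $(u,v)=-\mathcal{VW}(\tau^1,\tau^2,\tau^3,\theta,\gamma,A_0,B_0,C_0)$ of class $C^r$, and invoke Theorem~\ref{tj}. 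You also correctly identify the one genuinely quantitative step, namely that the $\epsilon=\epsilon(A_0,B_0,C_0)$ of Theorem~\ref{tj} must be compatible with the $L^4$-smallness produced by the slice map; this is settled, as you say, by the small-$L^4$ slice argument of \cite[\S2]{FL}, using that $\epsilon$ can be taken uniform for $(A_0,B_0,C_0)$ ranging over a bounded $L^2_k$-neighbourhood of $(A,B,C)$, which is what makes the two choices non-circular.

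Two small points worth noting. First, you are right to hedge at the end: with only $C^r$ perturbation data ($r>k$ but finite), Theorem~\ref{tj} gives $(a,b,c)\in C^{r+1}$, hence $\zeta\cdot(A,B,C)\in C^{r+1}$, not $C^\infty$. The ``$C^\infty$'' in the theorem statement is inherited verbatim from the $C^\infty$-perturbation setting of \cite[Proposition 3.7]{FL} and should really read ``$C^{r+1}$'' in this paper's $C^r$ framework; your phrasing makes the correct distinction, and this is a flaw of the statement, not of your proof. Second, since the theorem is stated for arbitrary $L^2_k$ solutions and not only for triples in $\mathcal{C}^\diamond_k(P)$, the slice theorem has to be invoked in its general form allowing a nontrivial stabiliser of $(A_0,B_0,C_0)$; this does not obstruct the Coulomb gauge-fixing and the rest of the argument goes through unchanged, but it is a point your write-up silently glosses over.
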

	
	\section{Transversality of the general part of the moduli spaces}
	
	In this section we show that the perturbed   Vafa-Witten map \eqref{cd}, when viewed as a section of the Banach vector bundle $\overline{\mathcal{E}}_k$ over $\mathcal{T}^r\times\mathcal{B}_k^\diamond(P)$, is transverse to the zero section of $\overline{\mathcal{E}}_k$. 
	
	\subsection{The linearization of the perturbed Vafa-Witten map}
	
	For $(\tau^1,\tau^2,\tau^3,\theta,\gamma)\in\mathcal{T}^r$, if $\Gamma:=(\tau^1,\tau^2,\tau^3,\theta,\gamma,A,B,C) \in \mathcal{T}^r\times\mathcal{B}_k(P)$ satisfies $\mathcal{VW}(\Gamma)=0$, then the linearization of the map $\mathcal{VW}$ at $\Gamma$ is 
	\begin{align*}
		&(D\mathcal{VW})_{\Gamma}(\delta\tau^1,\delta\tau^2,\delta\tau^3,\delta\theta,\delta\gamma,a,b,c)\\
		=&\begin{pmatrix}(D\mathcal{VW}_1)_{\Gamma}(\delta\tau^1,\delta\tau^2,\delta\tau^3,\delta\theta,\delta\gamma,a,b,c)\\
			(D\mathcal{VW}_2)_{\Gamma}(\delta\tau^1,\delta\tau^2,\delta\tau^3,\delta\theta,\delta\gamma,a,b,c)\end{pmatrix}\\
		=&\begin{pmatrix}
			d^*_Ab +d_A c-[B\centerdot a] -[C,a]+\delta\tau^1\big((B+[B,C])\centerdot\theta+C\wedge\theta\big)+\tau^1\big((b+[b,C]+[B,c])\centerdot\theta+c\wedge\theta\big)\\
			+\tau^1\big((B+[B,C])\centerdot\delta\theta+C\wedge\delta\theta\big)\\
			d_A^+a+\frac{1}{4}[b\centerdot B] +\frac12 \delta\tau^2[B,C]+\frac12 \tau^2[B,c] +\frac12\tau^2 [b, C]+\delta\tau^3B+\tau^3b+c\otimes\gamma+C\otimes\delta\gamma
		\end{pmatrix},
	\end{align*}
	where $(\delta\tau^1,\delta\tau^2,\delta\tau^3,\delta\theta,\delta\gamma,a,b,c)\in\mathcal{T}^r\times T_{[A,B,C]}\widetilde{\mathcal{M}}_{P,VW}^*(\tau^1,\tau^2,\tau^3,\theta,\gamma)$ (It's easy to see that $T_{(\tau^1,\tau^2,\tau^3,\theta,\gamma)}\mathcal{T}^r=\mathcal{T}^r$).   
	Note that  $(D\mathcal{VW})_{\Gamma}$ differs from $d^1_{A,B,C}$ in \eqref{cg} by a bounded linear operator acting on $\left(\delta\tau^1,\delta\tau^2,\delta\tau^3,\delta\theta,\delta\gamma\right)$. This fact, together with the estimate in Theorem~\ref{tj}, implies that $(D\mathcal{VW})_{\Gamma}$ has closed range. Hence
	$$\mathrm{Ran}(D\mathcal{VW})_{\Gamma} \not= \mathcal{C}'_{k-1}(P)$$
	if and only if there is a nonzero pair $(\phi,\psi)\in\mathcal{C}'_{k-1}(P)=L_{k-1}^2(X,\mathfrak{su}(2)_P\otimes\Lambda^1)\oplus L_{k-1}^2(X,\mathfrak{su}(2)_P\otimes\Lambda^{2,+})$ such that $\forall(\delta\tau^1,\delta\tau^2,\delta\tau^3,\delta\theta,\delta\gamma,a,b,c)\in\mathcal{T}^r\times T_{[A,B,C]}\widetilde{\mathcal{M}}_{P,VW}^*(\tau^1,$ $\tau^2,\tau^3,\theta,\gamma)$ we have
	\begin{equation}\label{yy}\begin{aligned}
			&\langle(D\mathcal{VW})_{\Gamma}(\delta\tau^1,\delta\tau^2,\delta\tau^3,\delta\theta,\delta\gamma,a,b,c),(\phi,\psi)\rangle_{L^2(X)}\\
			=&\langle(D\mathcal{VW}_1)_{\Gamma}(\delta\tau^1,\delta\tau^2,\delta\tau^3,\delta\theta,\delta\gamma,a,b,c),\phi\rangle_{L^2(X)}\\
			+&\langle(D\mathcal{VW}_2)_{\Gamma}(\delta\tau^1,\delta\tau^2,\delta\tau^3,\delta\theta,\delta\gamma,a,b,c),\psi\rangle_{L^2(X)}=0.
	\end{aligned}\end{equation}
	The above formula also implies $(\phi,\psi)\in\mathrm{Ker}(D\mathcal{VW})_{\Gamma}^*$ (where $(D\mathcal{VW})_{\Gamma}^*$ is the $L^2(X)$ adjoint operator of $(D\mathcal{VW})_{\Gamma}$), then the elliptic regularity for the Laplacian $(D\mathcal{VW})_{\Gamma}(D\mathcal{VW})_{\Gamma}^*$ with $C^{r-1}$ coefficients implies that $(\phi,\psi)$ is $C^{r+1}$(cf.  \cite[\S5]{FL}). Also, the Aronszajn's theorem (cf.  \cite[Remark 3]{Ar}, \cite[Theorem 1.8]{Kaz}) implies that $(\phi,\psi)\in\mathrm{Ker}\left((D\mathcal{VW})_{\Gamma}(D\mathcal{VW})_{\Gamma}^*\right)$ has the unique continuation property (cf.  \cite[Lemma 5.9]{FL}). Therefore, to prove that $(\phi,\psi)\equiv 0$ on $X$, it is only necessary to prove that $(\phi,\psi)$ is zero on an open subset of $X$.
	
	\subsection{The Agmon-Nirenberg unique continuation theorem}
	
	As in the case of the ASD equations \cite[Lemma 4.3.21]{D2} and $PU(2)$-monopole equations \cite[\S5.3.2]{FL}, our proof of the unique continuation property for zero points $[A,B,C]$ of the perturbed   Vafa-Witten map \eqref{cd} in radial gauge relies also on the Agmon and Nirenberg's unique continuation theorem for the solutions to an ordinary differential equation on
	a Hilbert space  \cite{Ag}. We state the version of that theorem we need here:
	\begin{theorem}\label{agth}
		Let $(\mathfrak{H},(\cdot,\cdot))$ be a Hilbert space and let $\mathcal{P}:\mathrm{Dom}(\mathcal{P}(r))=\mathfrak{H}_D\subset\mathfrak{H}\to\mathfrak{H}$ be a family of symmetric linear operators for $r\in[r_0,R)$. Suppose that $\eta\in C^1([r_0,R),\mathfrak{H})$ with $\eta(r)\in\mathfrak{H}_D$ and $\mathcal{P}\eta\in C^0([r_0,R),\mathfrak{H})$ such that
		$$\left|\left|\frac{d\eta}{dr}-\mathcal{P}(r)\eta(r)\right|\right|\leq c_1||\eta(r)||$$
		for some positive constants $c_1$ and all $r\in[r_0,R)$. If the function $r\mapsto(\eta(r),\mathcal{P}(r)\eta(r))$ is differentiable for $r\in[r_0,R)$ and satisfies
		$$\left|\left|\frac{d\mathcal{P}}{dr}\eta\right|\right|\leq c_2(||\mathcal{P}\eta||+||\eta||)$$
		for positive constants $c_2$ and every $r\in[r_0,R)$, then the following holds: If $\eta(r)=0$ for $r_0\leq r\leq r_1<R$, then $\eta(r)=0$ for all $r\in[r_0,R)$.
	\end{theorem}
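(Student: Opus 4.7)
The plan is to follow the classical log-convexity (Agmon--Nirenberg) argument. I argue by contradiction: set
$$r^\star:=\sup\{r\in[r_0,R):\eta\equiv 0\text{ on }[r_0,r]\},$$
so by hypothesis $r^\star\geq r_1$, and assume $r^\star<R$. Continuity of $\eta$ gives $\eta(r^\star)=0$, while the maximality of $r^\star$ ensures that every right-neighborhood of $r^\star$ meets $\{\eta\neq 0\}$. Pick a connected component $(a,b)$ of $\{\eta\neq 0\}\cap(r^\star,R)$ whose closure contains $r^\star$; then $\eta$ vanishes at $a$ (either because $a=r^\star$ or because $a>r^\star$ with $\eta(a)=0$), so the log-norm $L(r):=\log\|\eta(r)\|^2$ is smooth on $(a,b)$ and tends to $-\infty$ as $r\searrow a$.

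On $(a,b)$ I introduce the Rayleigh (frequency) quotient
$$\phi(r):=\frac{(\mathcal{P}(r)\eta(r),\eta(r))}{\|\eta(r)\|^2}.$$
A first calculation using the symmetry of $\mathcal{P}$ and the hypothesis $\|\eta'-\mathcal{P}\eta\|\leq c_1\|\eta\|$ gives $L'(r)=2\phi(r)+R_1(r)$ with $|R_1|\leq 2c_1$. A second, more delicate calculation starts from $\tfrac{d}{dr}(\mathcal{P}\eta,\eta)=((d\mathcal{P}/dr)\eta,\eta)+2\,\mathrm{Re}(\mathcal{P}\eta,\eta')$, substitutes $\eta'=\mathcal{P}\eta+e$ with $\|e\|\leq c_1\|\eta\|$, and exploits the Cauchy--Schwarz identity $\|\mathcal{P}\eta\|^2\|\eta\|^2\geq(\mathcal{P}\eta,\eta)^2$ to extract the nonnegative \emph{frequency gap} $\|\mathcal{P}\eta\|^2/\|\eta\|^2-\phi^2\geq 0$. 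The hypothesis $\|(d\mathcal{P}/dr)\eta\|\leq c_2(\|\mathcal{P}\eta\|+\|\eta\|)$ is then used, via Young's inequality, to absorb the otherwise uncontrollable term $((d\mathcal{P}/dr)\eta,\eta)/\|\eta\|^2$ into this positive gap. The resulting bookkeeping yields the Agmon--Nirenberg log-convexity estimate: there is a constant $C=C(c_1,c_2)$ such that $L(r)+\tfrac{C}{2}r^2$ is convex on $(a,b)$.

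Every convex function on an open interval is bounded below on each half-open sub-interval $(a,a+\delta]$ by a support line at any interior point. Hence $L$ itself is bounded below on a right-neighborhood of $a$, contradicting $L(r)\to -\infty$ as $r\searrow a$. The contradiction forces $r^\star=R$, i.e.\ $\eta\equiv 0$ on $[r_0,R)$.

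The main obstacle is the log-convexity estimate, specifically closing the differential inequality for $L$ (equivalently, for $\phi$). Without the hypothesis on $d\mathcal{P}/dr$ the theorem actually fails: consider the scalar example $\eta(r)=e^{-1/(r-a)}$ with $\mathcal{P}(r)=1/(r-a)^2$, which satisfies $\eta'=\mathcal{P}\eta$ exactly, has $L(r)=-2/(r-a)$ with $L''(r)=-4/(r-a)^3$ unbounded below, and indeed violates $\|(d\mathcal{P}/dr)\eta\|\leq c_2(\|\mathcal{P}\eta\|+\|\eta\|)$ since $\|(d\mathcal{P}/dr)\eta\|=(2/(r-a))\|\mathcal{P}\eta\|$. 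The hypothesis is therefore calibrated precisely so that $\|\mathcal{P}\eta\|/\|\eta\|$ appearing in the bound on $(d\mathcal{P}/dr)\eta$ can be absorbed by the positive Cauchy--Schwarz gap at the cost of enlarging $C$. A subsidiary technical point is that $\eta$ is assumed only $C^1$, so $L''$ exists a priori only in a weak sense; the convexity of $L+\tfrac{C}{2}r^2$ has to be justified distributionally or via mollification, which is routine.
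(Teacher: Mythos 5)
The paper does not prove Theorem~\ref{agth}; it imports it from Agmon--Nirenberg \cite{Ag} (cf.\ also \cite[Lemma~4.3.21]{D2}, \cite[\S5.3.2]{FL}), so there is no in-paper proof to compare against. Your outline reconstructs the right frequency-function argument, and your scalar counterexample correctly illustrates why the hypothesis on $d\mathcal P/dr$ is indispensable.

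There is, however, a genuine gap in the step asserting that $L(r)+\tfrac{C}{2}r^2$ is convex, and it is not merely a mollification issue. Convexity would require that $L'+Cr=2\phi+R_1+Cr$ be nondecreasing, but $R_1(r)=2\,\mathrm{Re}(e,\eta)/\|\eta\|^2$ with $e:=\eta'-\mathcal P\eta$ is only known to be bounded by $2c_1$; nothing in the hypotheses controls its variation, so $L'+Cr$ can oscillate with jumps of size comparable to $4c_1$ on arbitrarily short intervals. Moreover, after you decompose $\mathcal P\eta=\phi\eta+w$ with $w\perp\eta$ (which makes the cross-terms collapse to $2\,\mathrm{Re}(w,e)/\|\eta\|^2$) and absorb the $\sqrt G$ terms by Young, the clean estimate is $\phi'\ge -c_2|\phi|-C_0(c_1,c_2)$, not $\phi'\ge-C$; so even $2\phi'$, the only part of $L''$ that exists, is not bounded below by a constant. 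The closing step should instead bound $\phi$ from \emph{above} on $(a,r_2]$: on $\{\phi>0\}$ the inequality reads $\phi'+c_2\phi\ge-C_0$, and a backward comparison with the solution of $\psi'+c_2\psi=-C_0$, $\psi(r_2)=\max(\phi(r_2),0)$, gives $\phi\le\psi\le\psi(a)=:M$ on $(a,r_2]$ (where $\phi\le0$ one is below $\psi\ge0$ for free). Then $L'\le 2M+2c_1$, so $L(r)\ge L(r_2)-(2M+2c_1)(r_2-a)$ is bounded below, contradicting $L(r)\to-\infty$ as $r\to a^+$; this replaces the support-line argument. A smaller point: a connected component of $\{\eta\neq0\}\cap(r^\star,R)$ with $r^\star$ in its closure may fail to exist if zeros of $\eta$ accumulate at $r^\star$ from the right; fix this by choosing any $\rho>r^\star$ with $\eta(\rho)\neq0$ and setting $a:=\sup\{r<\rho:\eta(r)=0\}\ge r^\star$, so that $\eta(a)=0$ and $\eta\neq0$ on $(a,\rho]$.
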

	
	\subsection{Establishment of the transversality}
	
	Before establishing the transversality, we prove the following two lemmas first: The first lemma establishes the unique continuation property of the connection $A$ when $[A,B,C]$ is a zero point of the perturbed   Vafa-Witten map \eqref{cd}, the second lemma provides a full rank part of the first component of the map \eqref{cd}, which is crucial to the establishment of the transversality.
	
	\begin{lemma}\label{xzz}Let $(X,g)$ be an oriented, closed and smooth Riemannian 4-manifold and $P\to X$ a principle $SU(2)$- or $SO(3)$-bundle on $X$.  For every perturbation parameter $(\tau^1,\tau^2,\tau^3,\theta,\gamma)\in\mathcal{T}^r$, if $[A,B,C]\in\mathcal{B}_k(P)$ is a solution to the perturbed   Vafa-Witten equation
		\begin{equation}\label{yz}\left\{
			\begin{aligned}
				&d_A^*B+d_AC+\tau^1\big((B+[B,C])\centerdot\theta+C\otimes\theta\big)=0\\
				&F_A^++\frac{1}{8}[B\centerdot B]+\frac{1}{2}\tau^2[B,C]+\tau^3B+C\otimes\gamma=0\\
			\end{aligned}\right.\end{equation}
		such that $C\not\equiv 0$ and $[B,C]\equiv 0$ on $X$, then $A$ is reducible on $X$.
	\end{lemma}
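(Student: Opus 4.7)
I will combine the pointwise algebraic consequences of $[B,C]\equiv 0$ with a decomposition of the first equation in \eqref{yz} into components parallel and perpendicular to $C$ inside $\mathfrak{su}(2)_P$, and then use a unique-continuation argument to promote the resulting local reducibility to a global one.

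Let $U := \{x \in X : C(x) \neq 0\}$, which is open and nonempty since $C\not\equiv 0$. At every point of $U$, the centralizer of $C$ in $\mathfrak{su}(2)$ is the line $\mathbb{R}\,C$, so $[B,C]=0$ forces $B$ to be pointwise a scalar multiple of $\hat{C} := C/|C|$. Writing $B = \beta\,\hat{C}$ on $U$ for some scalar self-dual $2$-form $\beta$, and using $*B = B$ together with the Leibniz rule, I obtain an explicit decomposition of $d_A^*B$ into a $\hat{C}$-parallel part $-(*d\beta)\,\hat{C}$ and a $\hat{C}$-perpendicular part $-*(\beta\wedge d_A\hat{C})$ (the latter is orthogonal to $\hat{C}$ in $\mathfrak{su}(2)_P$ because $|\hat{C}|^2\equiv 1$ forces $d_A\hat{C}\perp\hat{C}$). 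The remaining terms $d_AC = d|C|\,\hat{C} + |C|\,d_A\hat{C}$ and $\tau^1\bigl((B+[B,C])\centerdot\theta + C\otimes\theta\bigr) = \tau^1\bigl(\beta\centerdot\theta + |C|\,\theta\bigr)\hat{C}$ split analogously, so the perpendicular component of the first equation of \eqref{yz} reduces to
\[
	|C|\,d_A\hat{C} - *(\beta\wedge d_A\hat{C}) = 0 \qquad \text{on } U.
\]

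Next I will observe that, for any self-dual $2$-form $\beta$, the linear operator $L_\beta : \omega \mapsto *(\beta\wedge\omega)$ on $\Omega^1$ is pointwise skew-adjoint: this follows from $\langle L_\beta\omega_1,\omega_2\rangle\,\mathrm{vol}_g = \beta\wedge\omega_1\wedge\omega_2$ (up to a universal sign), which is antisymmetric in $(\omega_1,\omega_2)$. Hence $|C|\,\mathrm{Id} - L_\beta$ is pointwise invertible on $U$, and the displayed equation forces $d_A\hat{C} = 0$ there. Consequently $\hat{C}$ is a nowhere-vanishing parallel section of $\mathfrak{su}(2)_P|_U$, so $A|_U$ reduces to a $U(1)$-connection.

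To globalize the reduction from $U$ to $X$, I will apply unique continuation twice. First, applying $d_A^*$ to the first equation of \eqref{yz} and using that $(d_A^*)^2 B$ is a zero-order curvature term in $B$ in dimension four produces a linear second-order elliptic equation for $C$ with smooth coefficients, to which Aronszajn's unique continuation theorem (cf.~\cite{Ar} and \cite[Lemma~5.9]{FL}) applies, showing that the zero locus $Z := X\setminus U$ has empty interior (so $U$ is dense). Second, covering $X$ by small geodesic balls $B_r(x)$, I will solve the overdetermined elliptic system $d_A\xi = 0$ on each ball with initial datum prescribed at an interior point of the dense set $U\cap B_r(x)$, obtaining local parallel extensions of $\hat{C}$. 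Patching these extensions across $Z$ is achieved via the Agmon--Nirenberg unique continuation theorem (Theorem~\ref{agth}) applied in radial gauge to differences of two parallel sections that already agree on a common open subset of $U$, producing a nonzero $\xi\in\Omega^0(\mathfrak{su}(2)_P)$ with $d_A\xi = 0$ on $X$. This is exactly the statement that $A$ is reducible on $X$.

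The main obstacle is this last globalization step: while the algebraic reduction on $U$ is short and direct, patching the local parallel extensions of $\hat{C}$ through the closed set $Z$ must be done carefully, because $\hat{C}$ does not extend smoothly across $Z$ and $U$ need not be connected. This forces the appeal to unique continuation both for the section $C$ (Aronszajn, to make $U$ dense) and for the parallel-transport system $d_A\xi=0$ (Agmon--Nirenberg, to patch across $Z$).
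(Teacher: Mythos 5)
Your algebraic reduction on the open set $U=\{C\neq 0\}$ is correct, and it is essentially the same argument as the paper's: $[B,C]=0$ forces $B=\beta\,\hat C$ pointwise, the parallel part of the first equation is absorbed, and the perpendicular part reads $|C|\,d_A\hat C - *(\beta\wedge d_A\hat C)=0$. Your observation that $\omega\mapsto *(\beta\wedge\omega)$ is pointwise skew-adjoint (so $|C|\,\mathrm{Id}-L_\beta$ is invertible) is a pleasant alternative to the paper's Lemma~A.2, which establishes the same invertibility by an explicit determinant computation; the two are equivalent, since for self-dual $\beta$ one has $*(\beta\wedge\omega)=\omega\centerdot\beta$.

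The globalization step, however, has two genuine gaps. First, you claim that applying $d_A^*$ to the first equation of \eqref{yz} produces a closed second-order elliptic equation for $C$ to which Aronszajn applies. It does not: the resulting equation reads $(d_A^*)^2 B + d_A^* d_A C + d_A^*\tau^1(B\centerdot\theta + C\otimes\theta)=0$, and while $(d_A^*)^2B$ is indeed a curvature (zeroth-order) term, the $d_A^*\tau^1$ term contributes first-order derivatives of $B$ as well as of $C$. You therefore cannot bound $|\Delta_A C|$ by $c(|C|+|\nabla_A C|)$ near the zero locus of $C$, which is what Aronszajn requires, because $B$ is unconstrained on $\{C=0\}$. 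The paper circumvents this by applying unique continuation to the pair $(B,C)$ via the first-order elliptic operator $\mathcal{L}_{A,\tau^1,\theta}$, not to $C$ alone.

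Second, the patching step does not work as described. You say you will ``solve the overdetermined elliptic system $d_A\xi=0$ on each ball with initial datum prescribed at an interior point,'' but local solvability of $d_A\xi=0$ with a prescribed value is exactly the local reducibility of $A$, which is what you are trying to prove --- assuming it is circular. The paper avoids this by extending $\xi$ via radial parallel transport (an honest ODE, hence always solvable), obtaining a gauge transformation $\hat\xi$ and the reducible connection $\hat A=\hat\xi^{-1}\cdot A$. The crucial input, which your argument does not use, is that the second equation of the perturbed system forces the radial component of $A$ to satisfy an ordinary differential equation $\frac{d\mathfrak{A}}{dr}=*_{g_r}F_{\mathfrak{A}(r)}$ once $[B\centerdot B]=0$ and $[B,C]=0$; both $\mathfrak{A}$ and $\hat{\mathfrak{A}}$ satisfy this ODE, and the Agmon--Nirenberg theorem is applied to their \emph{difference} $a=\hat{\mathfrak{A}}-\mathfrak{A}$, not to a difference of parallel sections. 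Your appeal to Agmon--Nirenberg applied to ``differences of two parallel sections'' has no ODE to feed to the theorem, and the passage from local parallel extensions (which you cannot produce) to a global one is asserted rather than established. The local-to-global step is the heart of the lemma and genuinely requires the structure of the Vafa--Witten equations, not just the geometry of $d_A\xi=0$.
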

	
	\begin{proof}
		For a solution $[A,B,C]\in\mathcal{B}_k(P)$ to \eqref{yz} such that $C\not\equiv 0$ and $[B,C]\equiv 0$ on $X$, the first equation of \ref{xzz} is reduced to
		$$d_A^*B+d_AC+\tau^1(B\centerdot\theta+C\otimes\theta)=0.$$
		For $(\lambda,\mu)\in L_{k}^2(X,\mathfrak{su}(2)_P\otimes\Lambda^{2,+})\oplus L_{k}^2(X,\mathfrak{su}(2)_P)$, define the linear map 
		$$\mathcal{L}_{A,\tau^1,\theta}(\lambda,\mu):=d_A^*\lambda+d_A\mu+\tau^1(\lambda\centerdot\theta+\mu\otimes\theta)\in L_{k-1}^2(X,\mathfrak{su}(2)_P\otimes\Lambda^1),$$
		then $\mathcal{L}_{A,\tau^1,\theta}(B,C)=0$ and also, $\mathcal{L}_{A,\tau^1,\theta}^*\mathcal{L}_{A,\tau^1,\theta}(B,C)=0$ where $\mathcal{L}_{A,\tau^1,\theta}^*$ is the $L^2$ adjoint of $\mathcal{L}_{A,\tau^1,\theta}$. So the Aronszajn's theorem (cf.  \cite[Remark 3]{Ar}, \cite[Theorem 1.8]{Kaz}) implies that $(B,C)$ has the unique continuation property. Let $X_{B,C}:=\{x\in X:B(x)\neq 0\}\cap\{x\in X:C(x)\neq 0\}\subseteq X$, then $X_{B,C}$ is either $X$ or an open dense subset of $X$.	Then	the property of Lie algebra $\mathfrak{su}(2)$ implies $B$ is at most rank 1 on $X$, so there is $\xi\in \Omega^0(X_{B,C},\mathfrak{su}(2)_P)$ with $\langle\xi,\xi\rangle=1$ and $\omega\in L_{k}^2(X,\Lambda^{2,+})$ such that $B=\xi\otimes\omega$ (cf. \cite[\S4.2]{Ma}) and $C=\langle C,C\rangle^\frac12\xi=|C|\xi$.  In addition, we have $[B\centerdot B]\equiv 0$ on $X$, so the equation \eqref{yz} can be reduced  to
		\begin{equation}\label{yz2}\left\{
			\begin{aligned}
				&d_A^*B+d_AC+\tau^1(B\centerdot\theta+C\otimes\theta)=0\\
				&F_A^++\tau^3B+C\otimes\gamma=0.\\
			\end{aligned}\right.\end{equation}
				
		We have
		\begin{equation}
			\begin{aligned}\label{yz3}
				0&=d_A^*B+d_AC+\tau^1(B\centerdot\theta+C\otimes\theta)\\
				&=d_A^*(\xi\otimes\omega)+d_A(|C|\xi)+\tau^1(\xi\otimes\omega\centerdot\theta+|C|\xi\otimes\theta)\\
				&=-d_A\xi\centerdot\omega+\xi\otimes d^*\omega+|C|d_A\xi+\xi\otimes d|C|+\xi\otimes\tau^1(\omega\centerdot\theta+|C|\theta).\\
			\end{aligned}
		\end{equation}
		$\langle\xi,\xi\rangle=1$ implies $\langle d_A\xi,\xi\rangle=0$, take inner product with $\xi$ we get $d^*\omega+d|C|+\tau^1(\omega\centerdot\theta+|C|\theta)=0$, so $-d_A\xi\centerdot\omega+|C|d_A\xi=0$ on $X_{B,C}$ and Lemma~\ref{xxx} implies $d_A\xi=0$ on $X_{B,C}$ (cf.  \cite[\S4.2]{Ma}). That's means $A$ is reducible on $X_{B,C}$.
		
		Next we will show that in fact $A$ is reducible on $X$. The method used here is analogous to the cases of the ASD equations \cite[Lemma 4.3.21]{D2} and $PU(2)$-monopole equations \cite[\S5.3.2]{FL}.
		
		Choose a point $x_0\in X_{B,C}$ and let $\rho$ be the injectivity radius of $X$ at $x_0$, there is a positive number $0<\epsilon<\frac{1}{2}\rho$ such that the geodesic ball $B(x_0,\epsilon)\subset X_{B,C}$. We will show that $A$ is reducible on $B(x_0,2\epsilon)$. We trivialize $\mathfrak{su}(2)_P$ over $B(x_0,2\epsilon)-\{x_0\}$ using parallel transport along radial geodesics (cf.  \cite[\S2.3.1]{D2}), this gives an isomorphism 
		\begin{equation}\label{is}
			\mathfrak{su}(2)_P|_{B(x_0,\epsilon)-\{x_0\}}\cong\mathfrak{su}(2)_{x_0}\times S^3\times(0,2\epsilon)
		\end{equation}
		where $\mathfrak{su}(2)_{x_0}\cong\mathfrak{su}(2)$. Note that $A$ is in radial gauge with respect to the point $x_0$, so the radial component of connection $A$ in this trivialization is zero. We let $\mathfrak{A}:=\mathfrak{A}(r)$, $r\in(0,2\epsilon)$, denote the resulting one-parameter family of the connection on the bundle $\mathfrak{su}(2)_{x_0}\times S^3$ over $S^3$. A section $B$ of the bundle $L^2_k(X,\mathfrak{su}(2)_P\otimes\Lambda^{2,+})$ over $B(x_0,2\epsilon)-\{x_0\}$ pulls back, via the isomorphism $\Lambda^{2,+}\otimes\mathfrak{su}(2)_P|_{B(x_0,2\epsilon)-\{x_0\}}\cong\Lambda^{2,+}\otimes\mathfrak{su}(2)_{x_0}\times S^3\times(0,2\epsilon)$, to a one-parameter family of sections $\mathfrak{B}(r)$ of the bundle $\Lambda^{2,+}\otimes\mathfrak{su}(2)_{x_0}\times S^3$ over $S^3$.
		
		Under the isomorphism \eqref{is}, $\xi$ can be viewed as a map to the structure group $SU(2)$ or $SO(3)$ and it satisfies
		$$0=\langle d_A\xi,\frac{\partial}{\partial r}\rangle=\frac{\partial\xi}{\partial r},$$
		hence we can extend $\xi$ by parallel
		translation via $A$ along radial geodesics emanating from $x_0$ to a gauge transformation $\hat{\xi}$ on connections on the bundle $\mathfrak{su}(2)_{x_0}\times S^3\times(0,2\epsilon)$. 
		
		Let $\hat{A}:=\hat{\xi}^{-1}\cdot A=A-(d_A\hat{\xi})\hat{\xi}^{-1}$, note that $d_A\hat{\xi}=0$ on $B(x_0,\epsilon)$, we have $\hat{A}=A$ on $B(x_0,\epsilon)$. Under the isomorphism $S^3\times(0,2\epsilon)\cong B(x_0,2\epsilon)-\{x_0\}$, the metric $g$ on $B(x_0,2\epsilon)-\{x_0\}$ pulls back to
		$$g=dr^2+g_r=dr^2+\gamma(r,\theta)d\theta^2,$$
		where $g_r:=\gamma(r,\theta)d\theta^2$ is the metric on $S^3$ pulled back from the restriction $g|_{S^3(x_0,r)}$ to the geodesic sphere $S^3(x_0,r):=\{x\in X:d_g(x,x_0)=r\}$. Denote by $*_{g_r}$ the Hodge star operator for the metric $g_r$ on $S^3$ and $*_g$ the Hodge star operator for the metric $g$ on $X$. For any differential form $\eta$ on $S^3$, we have 
		\begin{align*}
			*_g\eta&=dr\wedge*_{g_r}\eta,\\
			*_{g_r}\eta&=*_g(dr\wedge\eta).
		\end{align*}
		
		Let $\{e^1,e^2,e^3\}$ be an oriented, orthonormal frame for $T^*S^3$, then $\{e^0:=dr,e^1,e^2,e^3\}$ is an oriented orthonormal basis for $T^*X$ over $B(x_0,2\epsilon)-\{x_0\}$. Note that the linear space of self-dual $\R$-valued two forms on $X$ is 3-dimensional $C(X)$-linear space and $\R$-valued two forms on $S^3$ is 3-dimensional $C(S^3)$-linear space, hence a endomorphism $\tau^3\in C^r(\End_\R(\Lambda^{2,+}(X)))$ can also be regarded as an endomorphism of $L_k^2(S^3,\Lambda^2)$.
		
		With all above understood, $\{e^0\wedge e^1+e^2\wedge e^3,e^0\wedge e^2+e^3\wedge e^1,e^0\wedge e^3+e^1\wedge e^2\}$ is an orthonormal basis of $\Lambda^{2,+}(B(x_0,2\epsilon))$ (cf.  \cite[\S4.1.1]{Ma}). Let $A=\sum_{i=0}^3A_ie^i$ and $F_A=\sum_{0\leq i<j\leq 3}F_{ij}e^i\wedge e^j$ be the linear representation in the corresponding basis. The radial component of connection $A$ is zero implies $A_0=0$ on $B(x_0,2\epsilon)$ and we have $F_{0j}=\frac{dA_j}{dr}$, hence
		\begin{align*}
			F_A^+=&\frac{1}{2}(1+*_g)F_A\\
			=&\frac{1}{2}(F_{01}+F_{23})(e^0\wedge e^1+e^2\wedge e^3)+\frac{1}{2}(F_{02}+F_{31})(e^0\wedge e^2+e^3\wedge e^1)\\
			&+\frac{1}{2}(F_{03}+F_{12})(e^0\wedge e^3+e^1\wedge e^2)\\
			=&\frac{1}{2}e^0\wedge(F_{01}e^1+F_{02}e^2+F_{03}e^3)+\frac{1}{2}(F_{23}e^2\wedge e^3+F_{31}e^3\wedge e^1+F_{12}e^1\wedge e^2)\\
			=&\frac{1}{2}dr\wedge\frac{d\mathfrak{A}(r)}{dr}+\frac{1}{2}F_{\mathfrak{A(r)}}.
		\end{align*}
		
		Let $\tau^3=\begin{pmatrix}
			\tau^3_{11} & \tau^3_{12} & \tau^3_{13}\\
			\tau^3_{21} & \tau^3_{22} & \tau^3_{23}\\
			\tau^3_{31} & \tau^3_{32} & \tau^3_{33}\\
		\end{pmatrix}$ be the representation matrix under the basis $\{e^0\wedge e^1+e^2\wedge e^3,e^0\wedge e^2+e^3\wedge e^1,e^0\wedge e^3+e^1\wedge e^2\}$, it's not hard to see that the representation matrix of $\tau^3$ under the basis $\{e^2\wedge e^3, e^3\wedge e^1,e^1\wedge e^2\}$ of $L_k^2(S^3,\Lambda^2)$ is the same one. Let
		$$\omega=B_1(e^0\wedge e^1+e^2\wedge e^3)+B_2(e^0\wedge e^2+e^3\wedge e^1)+B_3(e^0\wedge e^3+e^1\wedge e^2)$$
		where $B_i\in L_k^2(B(x_0,2\epsilon),\R)$, $i=1,2,3$, then 
		$$\mathfrak{B}(r)=\xi\otimes(B_1e^2\wedge e^3+B_2e^3\wedge e^1+B_3e^1\wedge e^2)$$
		and we have
		\begin{align*}
			\tau^3B=&\tau^3(\xi\otimes\omega)\\
			=&\xi\otimes\big(B_1\tau^3(e^0\wedge e^1+e^2\wedge e^3)+B_2\tau^3(e^0\wedge e^2+e^3\wedge e^1)+B_3\tau^3(e^0\wedge e^3+e^1\wedge e^2)\big)\\
			=&\xi\otimes\Big(B_1\big(\tau^3_{11}(e^0\wedge e^1+e^2\wedge e^3)+\tau^3_{12}(e^0\wedge e^2+e^3\wedge e^1)+\tau^3_{13}(e^0\wedge e^3+e^1\wedge e^2)\big)+\\
			&B_2\big(\tau^3_{21}(e^0\wedge e^1+e^2\wedge e^3)+\tau^3_{22}(e^0\wedge e^2+e^3\wedge e^1)+\tau^3_{23}(e^0\wedge e^3+e^1\wedge e^2)\big)+\\
			&B_3\big(\tau^3_{31}(e^0\wedge e^1+e^2\wedge e^3)+\tau^3_{32}(e^0\wedge e^2+e^3\wedge e^1)+\tau^3_{33}(e^0\wedge e^3+e^1\wedge e^2)\big)\Big)\\
			=&\xi\otimes e^0\wedge\big((B_1\tau^3_{11}+B_2\tau^3_{21}+B_3\tau^3_{31})e^1+(B_1\tau^3_{12}+B_2\tau^3_{22}+B_3\tau^3_{32})e^2\\
			&+(B_1\tau^3_{13}+B_2\tau^3_{23}+B_3\tau^3_{33})e^3\big)+\xi\otimes\big((B_1\tau^3_{11}+B_2\tau^3_{21}+B_3\tau^3_{31})e^2\wedge e^3\\
			&+(B_1\tau^3_{12}+B_2\tau^3_{22}+B_3\tau^3_{32})e^3\wedge e^1+(B_1\tau^3_{13}+B_2\tau^3_{23}+B_3\tau^3_{33})e^1\wedge e^2\big)\\
			=&dr\wedge*_{g_r}\tau^3\mathfrak{B}(r)+\tau^3\mathfrak{B}(r),\\
			C\otimes\gamma=&C\otimes\big(\gamma_1(e^0\wedge e^1+e^2\wedge e^3)+\gamma_2(e^0\wedge e^2+e^3\wedge e^1)+\gamma_3(e^0\wedge e^3+e^1\wedge e^2)\big)\\
			=&\mathfrak{C}(r)\otimes\big(dr\wedge*_{g_r}\gamma(r)+\gamma(r)\big).
		\end{align*}
		So $F_A^++\tau^3B+C\otimes\gamma=0$ is equivalent to
		\begin{align*}
			dr\wedge\left(\frac{1}{2}\frac{d\mathfrak{A}(r)}{dr}+*_{g_r}\tau^3\mathfrak{B}(r)+\mathfrak{C}(r)\otimes*_{g_r}\gamma(r)\right)+\left(\frac{1}{2}F_{\mathfrak{A(r)}}+\tau^3\mathfrak{B}(r)+\mathfrak{C}(r)\otimes\gamma(r)\right)=0,
		\end{align*}
		which means
		\begin{equation}
			\left\{
			\begin{aligned}
				\frac{1}{2}\frac{d\mathfrak{A}(r)}{dr}+*_{g_r}\tau^3\mathfrak{B}(r)+\mathfrak{C}(r)\otimes*_{g_r}\gamma(r)&=0\\
				\frac{1}{2}F_{\mathfrak{A(r)}}+\tau^3\mathfrak{B}(r)+\mathfrak{C}(r)\otimes\gamma(r)&=0\\
			\end{aligned}\right.
		\end{equation}
		and hence on $B(x_0,2\epsilon)$,
		\begin{equation}\label{asd3}
			\frac{d\mathfrak{A}(r)}{dr}=*_{g_r}F_{\mathfrak{A(r)}}.
		\end{equation}
		$\hat{\mathfrak{A}}(r)$ and $\mathfrak{A}(r)$ are both solutions to \eqref{asd3}, and $\hat{\mathfrak{A}}(r)=\mathfrak{A}(r)$ when $0<r\leq\epsilon$. Let $a(t):=\hat{\mathfrak{A}}(r)-\mathfrak{A}(r)$, then $a(r)=0$ when $0<r\leq\epsilon$ and
		\begin{equation}
			\begin{aligned}\label{ode}
				\frac{da}{dr}&=*_{g_r}\left(d\hat{\mathfrak{A}}(r)-d\mathfrak{A}(r)+\frac{1}{2}[\hat{\mathfrak{A}}(r),\hat{\mathfrak{A}}(r)]-\frac{1}{2}[\mathfrak{A}(r),\mathfrak{A}(r)]\right)\\
				&=*_{g_r}\left(da+\frac{1}{2}[\mathfrak{A},a]+\frac{1}{2}[a,\hat{\mathfrak{A}}]\right).
			\end{aligned}
		\end{equation}
		Hence
		$$\left|\left|\frac{da}{dr}-*_{g_r}da\right|\right|_{L_k^2}\leq c_1||a||_{L_k^2}$$
		for some constant $c_1$ depending on $\hat{\mathfrak{A}}(r)$ and $\mathfrak{A}(r)$. 
		
		On $\Omega^1(S^3)$, $*_{g_r}^2=(-1)^{1*(3-1)}=1$. Hence for any two 1-forms $a_1,a_2\in L_k^2((S^3,g_r),\Lambda^1)$, we have
		\begin{align*}
			&\int_{S^3}\langle a_1,*_{g_r}da_2\rangle d\mathrm{vol}_r\\
			=&\int_{S^3}\langle a_1,*_{g_r}d*_{g_r}^2a_2\rangle d\mathrm{vol}_r=\int_{S^3}\langle a_1,(-1)^{3(2+1)+1}d^{*_{g_r}}*_{g_r}a_2\rangle d\mathrm{vol}_r\\
			=&\int_{S^3}\langle da_1,*_{g_r}a_2\rangle d\mathrm{vol}_r=\int_{S^3}\langle*_{g_r}da_1,a_2\rangle d\mathrm{vol}_r,
		\end{align*}
		which means the operators $*_{g_r}d$ are self-adjoint with respect to the metrics $g_r$, $r\in(0,2\epsilon)$.
		Let $d\mathrm{vol}$ be the volume form on $S^3$ defined by the standard metric $g_{std}$, then
		$$d\mathrm{vol}_r=h_r^2d\mathrm{vol},~r\in(0,2\epsilon)$$
		for some positive function $h_r$ on $S^3$. Define the operators $\mathcal{Q}_r:=h_r\circ(*_{g_r}d)\circ h_r^{-1}$ for every $r\in(0,2\epsilon)$ and Hilbert-space isomorphisms $L_k^2((S^3,g_r),\Lambda^1)\to L_k^2((S^3,g_{std}),\Lambda^1)$ by $a\to\alpha:=h_ra$, then
		\begin{align*}
			&\int_{S^3}\langle \mathcal{Q}_r(\alpha_1),\alpha_2\rangle d\mathrm{vol}=\int_{S^3}\langle h_r\circ(*_{g_r}d)\circ h_r^{-1}h_ra_1,h_ra_2\rangle d\mathrm{vol}\\
			=&\int_{S^3}\langle *_{g_r}da_1,a_2\rangle d\mathrm{vol}_r=\int_{S^3}\langle a_1,*_{g_r}da_2\rangle d\mathrm{vol}_r\\
			=&\int_{S^3}\langle h_ra_1,h_r\circ(*_{g_r}d)\circ h_r^{-1}h_ra_2\rangle d\mathrm{vol}=\int_{S^3}\langle \alpha_1,\mathcal{Q}_r(\alpha_2)\rangle d\mathrm{vol},
		\end{align*}
		therefore $\mathcal{Q}_r$ is self-adjoint with respect to $g_{std}$ for every $r\in(0,2\epsilon)$ and has dense domain $L_1^2((S^3,g_{std}),\Lambda^1)$. Since $a=h_r^{-1}\alpha$, we have
		$$\frac{da}{dr}=-h_r^{-2}\frac{dh_r}{dr}\alpha+h_r^{-1}\frac{d\alpha}{dr}$$
		and subsituting into \eqref{ode} gives
		\begin{equation}
			\begin{aligned}
				-h_r^{-2}\frac{dh_r}{dr}\alpha+h_r^{-1}\frac{d\alpha}{dr}&=h_r^{-1}\mathcal{Q}_rh_ra+\frac{1}{2}*_{g_r}([\mathfrak{A},h_r^{-1}\alpha]+[h_r^{-1}\alpha,\hat{\mathfrak{A}}]),\\
				\frac{d\alpha}{dr}&=\mathcal{Q}_r\alpha+\frac{1}{2}*_{g_r}([\mathfrak{A},\alpha]+[\alpha,\hat{\mathfrak{A}}])+h_r^{-1}\frac{dh_r}{dr}\alpha
			\end{aligned}
		\end{equation}
		for $r\in(0,2\epsilon)$. On compact interval $[\frac{1}{2}\epsilon,2\epsilon]$, the continuous function $h_r^{-1}\frac{dh_r}{dr}$ is bounded and hence there is a constant $c_1$ independent of $a$ such that
		$$\left|\left|\frac{d\alpha}{dr}-\mathcal{Q}_r\alpha\right|\right|_{L_k^2}\leq c_1||\alpha||_{L_k^2},~\frac{1}{2}\epsilon\leq r\leq2\epsilon.$$
		Also, the definition of $\mathcal{Q}_r$ yields the pointwise bounds
		$$\left|\left(\frac{d\mathcal{Q}_r}{dr}a\right)(r)\right|\leq c'(|(\nabla a)(r)|+|a(r)|)$$
		for $\frac{1}{2}\epsilon\leq r\leq2\epsilon$ and some constant $c'$ independent of $a$, where $\nabla$  denotes covariant derivatives on $\Lambda^{1}\otimes\mathfrak{su}(2)_{x_0}\times S^3$ over $S^3$ which is independent of $r$. Thus the standard elliptic estimate for $\mathcal{Q}_r$ implies
		$$\left|\left|\frac{d\mathcal{Q}_r}{dr}a\right|\right|_{L_k^2}\leq c_2(||\mathcal{Q}_ra||_{L_k^2}+||a||_{L_k^2})$$
		for $\frac{1}{2}\epsilon\leq r\leq2\epsilon$ and some constant $c_2$ independent of $a$. The conditions of Theorem \ref{agth} have been verified and $a(r)=0$ when $r\in(0,\epsilon)$, so we must have $a(r)=0$ for $r\in(0,2\epsilon)$, which means $d_A\hat{\xi}=0$ on $B(x_0,2\epsilon)$.
		
		By applying the above argument to every point $x_0$ of the open dense subset $X_{B,C}\subseteq X$, we can see that the extension $\hat{\xi}$ of $\xi$ can be extended to all of $X$ and $d_A\hat{\xi}=0$ on $X$, hence $A$ is reducible on $X$.
	\end{proof}
	
	\begin{lemma}\label{xz}Let $(X,g)$ be an oriented, closed and smooth Riemannian 4-manifold and $P\to X$ a principle $SU(2)$- or $SO(3)$-bundle on $X$. Then there is a first-category subset $\mathcal{T}^r_1\subset\mathcal{T}^r$ such that for every perturbation parameter $(\tau^1,\tau^2,\tau^3,\theta,\gamma)\in\mathcal{T}^r-\mathcal{T}^r_1$, if $[A,B,C]\in\mathcal{B}_k^\diamond(P)$ is a solution to the perturbed   Vafa-Witten equation
		\eqref{yz}, then $(B+[B,C])\centerdot\theta+C\otimes\theta$ is rank 3 on some open subset $U\subset X$.
	\end{lemma}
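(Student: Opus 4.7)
The plan is to run a parametric transversality argument. Define the universal zero set
$$\mathcal{N}:=\{(\tau^1,\tau^2,\tau^3,\theta,\gamma,[A,B,C])\in\mathcal{T}^r\times\mathcal{B}_k^\diamond(P):\mathcal{VW}(\tau^1,\tau^2,\tau^3,\theta,\gamma,A,B,C)=0\},$$
write $\Theta:=(B+[B,C])\centerdot\theta+C\otimes\theta$, and let $\mathcal{N}_{\mathrm{bad}}\subset\mathcal{N}$ be the subset on which $\Theta$ has rank $\leq 2$ at every point of $X$. Denoting by $\pi:\mathcal{N}\to\mathcal{T}^r$ the projection, the goal is to show $\mathcal{T}^r_1:=\pi(\mathcal{N}_{\mathrm{bad}})$ is first-category in $\mathcal{T}^r$.

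First I would verify that $\mathcal{N}$ is a smooth Banach submanifold of $\mathcal{T}^r\times\mathcal{B}_k^\diamond(P)$ near any point of $\mathcal{N}_{\mathrm{bad}}$. Since $\tau^1\in C^r(GL(\Lambda^1))$ and $\tau^2,\tau^3\in C^r(GL(\Lambda^{2,+}))$ are fiberwise invertible and $C\not\equiv 0$, the variations $\delta\theta\mapsto\tau^1(C\otimes\delta\theta)$ and $\delta\gamma\mapsto C\otimes\delta\gamma$ add enough freedom, on top of the Fredholm operator $d^1_{(A,B,C)}+d^{0,*}_{(A,B,C)}$ from \eqref{cj}, to force the full linearization of $\mathcal{VW}$ to be surjective. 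The implicit function theorem then furnishes the smooth structure on $\mathcal{N}$, and Theorems~\ref{tj}--\ref{tk} guarantee regularity of solutions.

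Next comes the perturbation analysis. Given a point $(\tau^1_0,\tau^2_0,\tau^3_0,\theta_0,\gamma_0,[A_0,B_0,C_0])\in\mathcal{N}_{\mathrm{bad}}$, pick $x_0\in X$ with $C_0(x_0)\neq 0$ (possible since $C_0\not\equiv 0$) and a bump $\delta\theta\in C^r(X,\Lambda^1)$ supported near $x_0$. Differentiating the one-parameter family of solutions $(A_t,B_t,C_t)$ obtained from the implicit function theorem gives
$$\dot\Theta(x_0)=\big((B_0+[B_0,C_0])\centerdot\delta\theta+C_0\otimes\delta\theta\big)(x_0)+\big((\dot B+[\dot B,C_0]+[B_0,\dot C])\centerdot\theta_0+\dot C\otimes\theta_0\big)(x_0).$$
I would then show that $\delta\theta(x_0)$ may be chosen so that $\dot\Theta(x_0)$ falls outside the tangent cone to the rank-$\leq 2$ variety $V_2\subset\mathrm{Hom}(T_{x_0}X,\mathfrak{su}(2)_{P,x_0})$ at $\Theta_0(x_0)$; since $V_2$ has codimension $(3-2)(4-2)=2$, a generic $\dot\Theta(x_0)$ escapes it. By lower semi-continuity of rank, $\Theta_t$ then jumps to rank $3$ on an open neighbourhood of $x_0$ for small $t>0$, so the perturbed parameter lies outside $\mathcal{T}^r_1$.

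The main obstacle is controlling the global correction $(\dot A,\dot B,\dot C)$ produced by the implicit function theorem: it couples to $\delta\theta$ through the VW linearization and could in principle cancel the rank increase at $x_0$. To defeat this I would exploit the freedom in $\delta\theta(x_0)$: already the explicit term $C_0(x_0)\otimes\delta\theta(x_0)$ sweeps the full $4$-dimensional subspace $C_0(x_0)\otimes T^*_{x_0}X$ of $\mathfrak{su}(2)_{P,x_0}\otimes T^*_{x_0}X$, and cutting by the codimension-$2$ tangent cone leaves a $2$-dimensional family of admissible directions for $\dot\Theta(x_0)$ regardless of the global correction. Finally, stratifying $\mathcal{N}$ by the (lower semi-continuous) rank of $\Theta$ and observing that $\mathcal{N}_{\mathrm{bad}}$ is the bottom stratum, a standard Sard--Smale / Baire category argument applied to $\pi$ restricted to this stratified structure yields that $\pi(\mathcal{N}_{\mathrm{bad}})$ is meager in $\mathcal{T}^r$, which is the desired first-category subset $\mathcal{T}^r_1$.
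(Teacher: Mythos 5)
Your approach is genuinely different from the paper's and, as proposed, has a critical gap that makes it circular.

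The paper does not use a Sard--Smale argument here at all. Its $\mathcal{T}^r_1$ is simply the set of parameters for which $\theta$ vanishes on some open subset of $X$ — an explicit, nowhere-dense set. Given a parameter outside $\mathcal{T}^r_1$, the paper's Lemma~\ref{xzz} (proved via Agmon--Nirenberg unique continuation) shows that if $A$ is irreducible and $C\not\equiv 0$, then necessarily $[B,C]\not\equiv 0$; hence on some open $U$ one simultaneously has $[B,C]\neq 0$ and $\theta\neq 0$, and then the purely \emph{algebraic} Lemma~\ref{xxy} says $(B+[B,C])\centerdot\theta+C\otimes\theta$ has rank $3$ pointwise on $U$. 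No transversality of a parametrized moduli space is invoked. This lemma is actually a \emph{prerequisite} for the transversality result, not a consequence of it.

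This is precisely where your proposal breaks down. Your first step requires $\mathcal{N}$ to be a smooth Banach submanifold near $\mathcal{N}_{\mathrm{bad}}$, i.e.\ surjectivity of the parametrized linearization $(D\mathcal{VW})_\Gamma$. You assert that the variations $\delta\theta\mapsto\tau^1(C\otimes\delta\theta)$ and $\delta\gamma\mapsto C\otimes\delta\gamma$ ``add enough freedom'' for this. They do not: at any point where $C\neq 0$, the image $C\otimes\Lambda^{2,+}$ is only a $3$-dimensional subspace of the $9$-dimensional fiber $\mathfrak{su}(2)_P\otimes\Lambda^{2,+}$, and $C\otimes\Lambda^1$ is only $4$-dimensional inside the $12$-dimensional $\mathfrak{su}(2)_P\otimes\Lambda^1$. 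These alone leave large orthogonal complements for a putative cokernel element $(\phi,\psi)$. In the paper, killing $(\phi,\psi)$ requires pairing with $\delta\tau^2[B,C]$, $\delta\tau^3 B$ (via Lemma~\ref{xx}) and with $\delta\tau^1\big((B+[B,C])\centerdot\theta+C\otimes\theta\big)$ — and it is exactly Lemma~\ref{xz}, the statement you are trying to prove, that guarantees the last expression has full rank on an open set so that $\phi$ is forced to vanish. So your route begs the question: the smooth manifold structure on $\mathcal{N}$ that your Sard--Smale argument needs is established in the paper only \emph{after} and \emph{because of} this lemma. Beyond the circularity, the perturbation analysis at a single $x_0$ is also incomplete: $(\dot A,\dot B,\dot C)$ is a global solution of the linearized equation and you have no direct control on how it feeds back into $\dot\Theta(x_0)$; the claimed ``$2$-dimensional family of admissible directions'' is not secured. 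You also miss the key structural input — Lemma~\ref{xzz}, i.e.\ that irreducibility of $A$ together with $C\not\equiv 0$ forces $[B,C]\not\equiv 0$ — which is what turns the problem into a pointwise linear-algebra question (Lemma~\ref{xxy}) and lets the paper dispense with any parametric transversality in this step.
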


	\begin{proof}
		Let $\mathcal{T}^r_1:=\{(\tau^1,\tau^2,\tau^3,\theta,\gamma)\in\mathcal{T}^r:\theta=0 \text{ on some open subset of } X\}$, it's not hard to see that $\mathcal{T}^r_1$ is a nowhere dense, hence first-category subset of $\mathcal{T}^r$. For any fixed $(\tau^1,\tau^2,\tau^3,\theta,\gamma)\in\mathcal{T}^r-\mathcal{T}^r_1$, 
		let $[A,B,C]\in\mathcal{B}_k^\diamond(P)$ be any solution to \eqref{yz}, the definition of $\mathcal{B}_k^\diamond(P)$ implies that $A$ is irreducible on $X$, hence by Lemma \ref{xzz} we have $[B,C]\not\equiv0$ on $X$, so there is an open subset $U\subset X$ such that $[B,C]\neq 0$ on $U$ and Lemma \ref{xxx} implies  $(B+[B,C])\centerdot\theta+C\otimes\theta$ is rank 3 on $U$.
	\end{proof}
	
	Now we prove that $(\phi,\psi)$ is zero on an open subset of $X$. For a perturbation parameter $(\tau^1,\tau^2,\tau^3,\theta,\gamma)\in\mathcal{T}^r-\mathcal{T}^r_1$, let $[A,B,C]\in\mathcal{B}_k^\diamond(P)$ be a solution to the corresponding perturbed   Vafa-Witten equation \eqref{yz}. Lemma \ref{xzz} implies that there is an open subset $V\subset X$ such that $[B,C]\neq 0$ on $V$. In equations \eqref{yy}, set $(\delta\tau^1,\delta\tau^3,\delta\theta,\delta\gamma,a,b,c)=0$ we have
	$$\langle\delta\tau^2[B,C],\psi\rangle_{L^2(X)}=0,~\forall\delta\tau^2\in C^r(\mathfrak{gl}(\Lambda^{2,+})),$$
	similiarly,
	$$\langle\delta\tau^3B,\psi\rangle_{L^2(X)}=\langle C\otimes\delta\gamma,\psi\rangle_{L^2(X)}=0,~\forall(\delta\tau^3,\delta\gamma)\in C^r(\mathfrak{gl}(\Lambda^{2,+}))\times C^r(X,\Lambda^1).$$
	On $V$, $[B,C]\neq 0$, so applying Lemma \ref{xx} and \cite[Lemma 2.3]{Fe}, we have $\psi\equiv 0$ on $V$. Then set $(\delta\tau^2,\delta\tau^3,\delta\theta,\delta\gamma,a,b,c)=0$ we can get
	$$\langle\delta\tau^1\big((B+[B,C])\centerdot\theta+C\otimes\theta\big),\phi\rangle_{L^2(X)}=0,~\forall\delta\tau^1\in C^r(\mathfrak{gl}(\Lambda^1))$$
	on $V$, Lemma \ref{xz} and  \cite[Lemma 2.3]{Fe} imply $\phi\equiv 0$ on $V$.
	
	Hence  $(\phi,\psi)\equiv 0$ on $V$, and $(\phi,\psi)\equiv 0$ on $X$ by applying the unique continuation property of the Laplacian $(D\mathcal{VW})_{\Gamma}(D\mathcal{VW})_{\Gamma}^*$. We finish the establishment of the transversality.
	
	By the slice result (cf.  \cite[Proposition 2.8]{FL}), $T_{[A,B,C]}\mathcal{B}_k^\diamond(P)$ may be identified with $\mathrm{Ker}~d_{(A,B,C)}^{0,*}$. For $(a,b,c)\in\mathrm{Ker}~d_{(A,B,C)}^{0,*}$, we have
	$$\begin{aligned}
		(D\mathcal{VW})_{(A,B,C)}(0,1,0,0,0,a,b,c)&=d_{(A,B,C)}^1(a,b,c)\\
		&=(d_{(A,B,C)}^{0,*}+d_{(A,B,C)}^1)(a,b,c),\end{aligned}$$
	which makes the differential $(D\mathcal{VW})_{(A,B,C)}|_{\{0\}\times T\mathcal{C}_k^\diamond(P))}$ a Fredholm operator, where ${\{0\}\times T\mathcal{C}_k^\diamond(P)}=T((\tau^1,\tau^2,\tau^3,\theta,\gamma)\times\mathcal{B}_k^\diamond(P))$. Thus $\mathcal{VW}$ is a Fredholm section when restricted to the fixed-parameter fibers $(\tau^1,\tau^2,\tau^3,\theta,\gamma)\times\mathcal{C}_k^\diamond(P)\subset(\mathcal{T}^r-\mathcal{T}^r_1)\times\mathcal{C}_k^\diamond(P)$ where $(\tau^1,\tau^2,\tau^3,\theta,\gamma)\in\mathcal{T}^r-\mathcal{T}^r_1$, and the Sard-Smale theorem (cf. \cite[Proposition 4.12]{Fe}, \cite[Proposition 4.3.11]{D2}) implies that there is a first-category subset $\mathcal{T}^r_2\subset\mathcal{T}^r-\mathcal{T}^r_1$ such that the zero sets in $\mathcal{C}_k^\diamond(P)$ of $\mathcal{VW}(\tau^1,\tau^2,\tau^3,\theta,\gamma,\cdot)$ are regular for all perturbations $(\tau^1,\tau^2,\tau^3,\theta,\gamma)\in\mathcal{T}^r-(\mathcal{T}^r_{1}\cup\mathcal{T}^r_{2})$. 
	
	In summary, we have the following theorem, which is also Theorem \ref{main}:
	
	\begin{theorem}Let $(X,g)$ be a closed, oriented and smooth Riemannian 4-manifold, Then there is a first-category subset $\mathcal{T}^r_{fc}\subset\mathcal{T}^r$ such that for all $(\tau^1,\tau^2,\tau^3,\theta,\gamma)$ in $\mathcal{T}^r-\mathcal{T}^r_{fc}$ the following holds: The zero set of the section $\mathcal{VW}(\tau^1,\tau^2,\tau^3,\theta,\gamma,\cdot)$ in $\mathcal{C}_k^\diamond(P)$ is regular and the moduli space $\widetilde{\mathcal{M}}_{P,VW}^*(\tau^1,\tau^2,\tau^3,\theta,\gamma):=\widetilde{\mathcal{M}}_{P,VW}(\tau^1,\tau^2,\tau^3,\theta,\gamma)\cap\mathcal{B}_k^\diamond(P)=\big(\mathcal{VW}(\tau^1,\tau^2,\tau^3,\theta,\gamma,\cdot)^{-1}(0)/\mathcal{G}_{k+1}(P)\big)\cap\mathcal{B}_k^\diamond(P)$ is a smooth manifold of dimension zero.
	\end{theorem}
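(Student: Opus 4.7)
The plan is to assemble the result from the tools already developed in the excerpt: establish that the parametrized section $\mathcal{VW}$ of $\overline{\mathcal{E}}_k$ is transverse to the zero section at every point of $(\mathcal{T}^r-\mathcal{T}^r_1)\times\mathcal{B}_k^\diamond(P)$, confirm Fredholmness of the restriction to fixed-parameter fibers, and apply Sard–Smale to extract a second first-category set $\mathcal{T}^r_2$ so that $\mathcal{T}^r_{fc}:=\mathcal{T}^r_1\cup\mathcal{T}^r_2$ does the job. The dimension count will follow from the index computation $\ind(\mathcal{D}_{(A,B,C)})=\ind(\mathcal{D}_A)=0$ already recorded in Section 2.

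First, I would fix $(\tau^1,\tau^2,\tau^3,\theta,\gamma)\in\mathcal{T}^r-\mathcal{T}^r_1$ and a zero $\Gamma=(\tau^1,\tau^2,\tau^3,\theta,\gamma,A,B,C)$ with $[A,B,C]\in\mathcal{B}_k^\diamond(P)$. Because $(D\mathcal{VW})_\Gamma$ differs from $d^1_{(A,B,C)}$ by a bounded operator in the parameter variables and $d^1_{(A,B,C)}+d^{0,*}_{(A,B,C)}$ is elliptic by Theorem \ref{tj}, the image of $(D\mathcal{VW})_\Gamma$ is closed. Hence surjectivity reduces to proving that any $(\phi,\psi)\in\mathrm{Ker}(D\mathcal{VW})_\Gamma^*$ vanishes identically. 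Elliptic regularity for the Laplacian $(D\mathcal{VW})_\Gamma(D\mathcal{VW})_\Gamma^*$ makes $(\phi,\psi)$ of class $C^{r+1}$, and Aronszajn's theorem gives unique continuation, so it suffices to prove $(\phi,\psi)\equiv 0$ on some open subset.

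Next, I would exploit the perturbation directions one at a time in the orthogonality relation \eqref{yy}. Because $[A,B,C]\in\mathcal{B}_k^\diamond(P)$ the connection $A$ is irreducible, so Lemma \ref{xzz} forces $[B,C]\not\equiv 0$, giving an open set $V\subset X$ on which $[B,C]\neq 0$. Varying $\delta\tau^2$ alone yields $\langle\delta\tau^2[B,C],\psi\rangle_{L^2}=0$ for every $\delta\tau^2\in C^r(\mathfrak{gl}(\Lambda^{2,+}))$, and analogous pairings with $\delta\tau^3$ and $\delta\gamma$ isolate $B$ and $C\otimes\delta\gamma$; combining these with the pointwise rank/nondegeneracy arguments (Lemma \ref{xx} and the auxiliary result \cite[Lemma 2.3]{Fe}) forces $\psi\equiv 0$ on $V$. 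Turning on $\delta\tau^1$ alone then produces $\langle\delta\tau^1((B+[B,C])\centerdot\theta+C\otimes\theta),\phi\rangle_{L^2}=0$ on $V$, and Lemma \ref{xz} ensures the bracketed expression has rank three on an open subset of $V$, so $\phi\equiv 0$ there as well. Aronszajn's unique continuation propagates this to all of $X$.

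Finally, with transversality in hand, the restriction $(D\mathcal{VW})_\Gamma|_{\{0\}\times T\mathcal{B}_k^\diamond(P)}$ differs from the elliptic operator $d^1_{(A,B,C)}+d^{0,*}_{(A,B,C)}$ by a compact operator, hence is Fredholm of index zero. The Sard–Smale theorem applied to the projection from the universal zero locus $\mathcal{VW}^{-1}(0)\subset(\mathcal{T}^r-\mathcal{T}^r_1)\times\mathcal{B}_k^\diamond(P)$ onto the parameter space then yields a first-category set $\mathcal{T}^r_2$ of critical values whose complement consists of regular values. For every $(\tau^1,\tau^2,\tau^3,\theta,\gamma)\in\mathcal{T}^r-(\mathcal{T}^r_1\cup\mathcal{T}^r_2)$ the zero set is a smooth submanifold of dimension equal to the Fredholm index, which is $0$. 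The principal obstacle is the transversality step: coordinating the five perturbation directions so that the combined pairings with $(\phi,\psi)$ exhaust the target fiber, which is precisely why both Lemma \ref{xzz} (to guarantee $[B,C]\not\equiv 0$ when $A$ is irreducible) and Lemma \ref{xz} (to guarantee the required rank-three behavior) are indispensable inputs.
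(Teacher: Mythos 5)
Your proposal is correct and reproduces the paper's own argument step for step: closed range of $(D\mathcal{VW})_\Gamma$ via the elliptic estimate, reduction to showing a cokernel element $(\phi,\psi)$ vanishes on an open set, use of Lemma~\ref{xzz} to force $[B,C]\not\equiv 0$ for irreducible $A$, the $\delta\tau^2,\delta\tau^3,\delta\gamma$ pairings together with Lemma~\ref{xx} and \cite[Lemma~2.3]{Fe} to kill $\psi$, the $\delta\tau^1$ pairing with Lemma~\ref{xz} to kill $\phi$, unique continuation via Aronszajn, and finally Fredholmness of the restriction to $\mathrm{Ker}~d^{0,*}_{(A,B,C)}$ plus Sard--Smale and the index count $\ind(\mathcal{D}_A)=0$. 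The only cosmetic inaccuracy is saying the restricted differential "differs from $d^1+d^{0,*}$ by a compact operator" when in fact they coincide on the Coulomb slice $\mathrm{Ker}~d^{0,*}$, but the Fredholm-index-zero conclusion is unaffected.
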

	
	\begin{remark}Although we have established the transversalty, the construction of the Vafa-Witten invariants is still challenging. One of the main difficulties is that, unlike the Seiberg-Witten cases, we can't get the compactness of the Vafa-Witten moduli spaces because there's no a priori estimate for the additional section $B$. By attaching some additional condition to the curvature $F_A$ of the solutions $(A,B)$ to the original Vafa-Witten equation \eqref{ac}, Tanaka gets a compactness theorem of the moduli spaces \cite[Theorem 1.3]{Tan2}.
	\end{remark}
	
	\begin{remark}
		The celebrated orientation results of gauge-theoretic moduli spaces found by Joyce, Tanaka and Upmeier show that the Vafa-Witten moduli spaces $\widetilde{\mathcal{M}}_{P,VW}^*(\tau^1,\tau^2,\tau^3,\theta,\gamma)$ always have canonical orientations \cite[Theorem 4.8]{JTU}, an orientation of $\widetilde{\mathcal{M}}_{P,VW}^*(\tau^1,\tau^2,\tau^3,\theta,\gamma)$ consists of attaching a sign $\pm 1$ to each point of it due to its dimension is zero. 
		
		In the case of zero dimensional Seiberg-Witten moduli spaces, the invariants are defined by counting the points of moduli spaces with signs. But for Vafa-Witten moduli spaces, the summation of signed points is either zero or infinity due to the lack of compactness, which is not a well defined invariant. The compactification of the Vafa-Witten moduli spaces is essential to the construction of the invariants.
	\end{remark}
	
	\ni\textbf{Acknowledgements.} 
	This research is partially supported by NSFC grants 12201255. It's a pleasure to thank Huijun Fan, Bo Dai and Yuuji Tanaka for their valuable advice and development during the research. Also, I thank my supervisor Kung-Ching Chang for his support and encouragement, and discussing with him makes me learn much about mathematical physics and this subject.
	
	\section*{Appendix}
	\appendix
	\setcounter{theorem}{0}
	\renewcommand\thetheorem{A.\arabic{theorem}}
	
	In this appendix, we list the notations and the proofs of lemmas used in this paper.
	
	Let $V$ be a finite dimensional inner product space. Let $\{ e_1,\cdots, e_n\}$ be an orthonormal basis for $V$, and  $\{e^1,\cdots,e^n\}$  the dual basis for $V^*$. For any $\alpha\in\Lambda^{p}V^*$, $\beta\in\Lambda^{q}V^*$, we define
	$$\alpha\centerdot\beta=(-1)^{p-1}\sum_{i=1}^n(\iota_{e_i}\alpha)\wedge(\iota_{e_i}\beta)\in\Lambda^{p+q-2}V^*,$$
	where $\iota_{e_i}$ is the contraction with $e_i$.
	
	The exterior algebra $\L^\bullet V^*$ inherits an inner product, such that $\{ e^{i_1}\wedge e^{i_2}\wedge\cdots\wedge e^{i_{p}}; 0\leq p\leq n, i_1 <i_2 <\cdots <i_p\}$ forms an orthonormal basis.  The inner product of two elemmaents $\a,\b \in \L^\bullet V^*$ is denoted as $\a\cdot\b$,

	Replacing $V$ by the tangent space $T_xX$ at any point $x\in X$ of a Riemannian manifold $X$, we can define the products $\centerdot$ and $\cdot$ for differential forms. Let $\mathfrak{g}$ be the  Lie algebra of a compact Lie group $G$, equipped with an invariant inner product $\langle\cdot,\cdot\rangle$, here invariance means $\langle [\xi,\eta],\zeta\rangle=\langle \xi,[\eta,\zeta]\rangle$, $\forall \xi,\eta,\zeta \in\mathfrak{g}$. If $\mathfrak{g}$ is $\mathfrak{su}(2)$, an invariant inner product is given by $\langle \xi,\eta\rangle=-\frac{1}{2}\mathrm{tr}(\xi\eta)$ \cite[(A.18)]{Ma}, where $\xi,\eta\in\mathfrak{su}(2)$ are regarded as matrices.
	
	Let $P\to X$ be a principal $G$-bundle and $\fg_P$ be the adjoint bundle. For $\fg_P$-valued forms, for example, if $\omega_1=\xi_1\otimes\alpha_1$ and $\omega_2=\xi_2\otimes\a_2$, where $\xi_1,\xi_2\in\W^0(X, \fg_P)$ and $\alpha_1\in\Omega^p(X)$, $\a_2\in\Omega^q(X)$, we define $[\omega_1\centerdot\omega_2] =[\xi_1,\xi_2]\otimes\alpha_1\centerdot\a_2$ and $[\omega_1\cdot\omega_2]=[\xi_1,\xi_2]\otimes\alpha_1\cdot\a_2$, $\langle\omega_1\cdot \omega_2\rangle=\langle \xi_1, \xi_2\rangle \a_1\cdot \a_2$. See  \cite[Appendix]{Ma} for more detail.
	
	On a Riemannian 4-manifold $X$, the \emph{rank} of a section $B\in\Omega^{2,+}(X, \fg_P)$ is defined as follows. Choose local frames for $\fg_P$ and  $\Lambda^{2,+}(T^*X)$, then the section $B$ is represented by a $d\times 3$ matrix-valued function with respect to the local frames, where $d=\dim G$. The rank of $B$ at a point of $X$ is just the rank of the matrix at that point, and $\rank(B)$ is the maximum of the pointwise rank over $X$. If $\mathfrak{g}$ is $\mathfrak{su}(2)$, then the maximum of the rank of $B$ is $d=3$.
	
	\begin{lemma}\label{xx}Let $\alpha$, $\beta$ be two elements of $\mathfrak{su}(2)$ and they are linearly independent, then $\alpha,\beta,[\alpha,\beta]$ form a basis of the linear space $\mathfrak{su}(2)$.
	\end{lemma}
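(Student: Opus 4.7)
Since $\dim_{\mathbb{R}}\mathfrak{su}(2)=3$, the plan is to prove that the three vectors $\alpha$, $\beta$, $[\alpha,\beta]$ are linearly independent, and I would split this into two small observations: (i) $[\alpha,\beta]$ is orthogonal to $\mathrm{span}\{\alpha,\beta\}$ with respect to the invariant inner product $\langle\cdot,\cdot\rangle$ fixed in the appendix, and (ii) $[\alpha,\beta]\neq 0$. Since $\langle\cdot,\cdot\rangle$ is positive definite (this follows from $\langle\xi,\eta\rangle=-\tfrac12\tr(\xi\eta)$ together with $\xi^{*}=-\xi$), these two facts force $[\alpha,\beta]\notin\mathrm{span}\{\alpha,\beta\}$, so $\{\alpha,\beta,[\alpha,\beta]\}$ spans all of $\mathfrak{su}(2)$ and is therefore a basis.

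For (i), I would use the invariance $\langle[\xi,\eta],\zeta\rangle=\langle\xi,[\eta,\zeta]\rangle$ recorded in the appendix. Taking $(\xi,\eta,\zeta)=(\alpha,\beta,\alpha)$ gives $\langle[\alpha,\beta],\alpha\rangle=\langle\alpha,[\beta,\alpha]\rangle=-\langle\alpha,[\alpha,\beta]\rangle=-\langle[\alpha,\beta],\alpha\rangle$, so $\langle[\alpha,\beta],\alpha\rangle=0$; the choice $(\xi,\eta,\zeta)=(\alpha,\beta,\beta)$ yields $\langle[\alpha,\beta],\beta\rangle=\langle\alpha,[\beta,\beta]\rangle=0$. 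Thus $[\alpha,\beta]$ is perpendicular to both $\alpha$ and $\beta$.

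For (ii), I would work in an explicit basis $\{e_1,e_2,e_3\}$ of $\mathfrak{su}(2)$ (for instance $e_k=\tfrac{i}{2}\sigma_k$ with $\sigma_k$ the Pauli matrices), whose commutation relations read $[e_i,e_j]=c\,\epsilon_{ijk}e_k$ for a nonzero constant $c$. Expanding $\alpha=\sum_i a_i e_i$ and $\beta=\sum_i b_i e_i$ with $a,b\in\mathbb{R}^3$, one finds $[\alpha,\beta]=c\sum_k (a\times b)_k\,e_k$, where $a\times b$ is the ordinary cross product. Hence $[\alpha,\beta]=0$ if and only if $a$ and $b$ are proportional, which contradicts the linear independence of $\alpha$ and $\beta$.

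The only mild obstacle is step (ii), since it relies on a concrete identification of $\mathfrak{su}(2)$ with $(\mathbb{R}^3,\times)$; alternatively one can argue abstractly that the centralizer of any nonzero element of the simple three-dimensional Lie algebra $\mathfrak{su}(2)$ is one-dimensional, so $[\alpha,\beta]=0$ would force $\beta$ to be a scalar multiple of $\alpha$, giving the same contradiction. Combining (i) and (ii) completes the proof.
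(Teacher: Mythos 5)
Your proof is correct, and it takes a genuinely different route from the paper. The paper works entirely in coordinates: it writes $\alpha,\beta,[\alpha,\beta]$ in the cyclic basis $\{\eta_1,\eta_2,\eta_3\}$, sets up the $3\times3$ linear system $k_1\alpha+k_2\beta+k_3[\alpha,\beta]=0$, and evaluates the determinant of the coefficient matrix, which comes out to $2\bigl((\alpha_2\beta_3-\alpha_3\beta_2)^2+(\alpha_3\beta_1-\alpha_1\beta_3)^2+(\alpha_1\beta_2-\alpha_2\beta_1)^2\bigr)$, i.e.\ (twice) the squared norm of the cross product of the coordinate vectors; linear independence of $\alpha,\beta$ makes this positive, forcing $k_1=k_2=k_3=0$. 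You instead split the argument into two structural pieces: $\mathrm{ad}$-invariance of the inner product gives $[\alpha,\beta]\perp\mathrm{span}\{\alpha,\beta\}$ for free, and then you only need $[\alpha,\beta]\neq 0$, which you get from the cross-product identification or from the one-dimensionality of centralizers of nonzero elements in $\mathfrak{su}(2)$. The two approaches in fact meet at the same computational core — the paper's determinant is exactly $2\lvert a\times b\rvert^2$, which is your step (ii) in disguise — but your version isolates the conceptual reason behind it. What you buy is that step (i) is completely general (it holds in any compact Lie algebra with an $\mathrm{ad}$-invariant positive-definite form and shows $[\alpha,\beta]$ can never lie in $\mathrm{span}\{\alpha,\beta\}$ nontrivially), so the whole burden is shifted to the nonvanishing of the bracket, which is where $\mathfrak{su}(2)$ specifically enters; the paper's determinant expansion is more self-contained but obscures this split. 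Both are valid; yours is cleaner and more portable.
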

	
	\begin{proof} Let $\alpha=\alpha_1\eta_1+\alpha_2\eta_2+\alpha_3\eta_3$ and $\beta=\beta_1\eta_1+\beta_2\eta_2+\beta_3\eta_3$ where $\alpha_i,\beta_i\in\mathbb{R}$, $i=1,2,3$, $\{\eta_1,\eta_2,\eta_3\}$ is a  basis for $\mathfrak{su}(2)$ such that $[\eta_1,\eta_2]=2\eta_3$ and cyclic permutations (The quaternion $\textbf{i},\textbf{j},\textbf{k}$ satisfy these equalities.). Then we have
		$$[\alpha,\beta]=2(\alpha_2\beta_3-\alpha_3\beta_2)\eta_1+2(\alpha_3\beta_1-\alpha_1\beta_3)\eta_2+2(\alpha_1\beta_2-\alpha_2\beta_1)\eta_3.$$
		We need to show that $\alpha,\beta,[\alpha,\beta]$ are linearly independent. Let $k_1,k_2,k_3\in\mathbb{R}$ be three real numbers and $$k_1\alpha+k_2\beta+k_3[\alpha,\beta]=0,$$
		then note that $\{\eta_1,\eta_2,\eta_3\}$ are linearly independent, we must have
		$$\left\{
		\begin{aligned}
			k_1\alpha_1+k_2\beta_1+2k_3(\alpha_2\beta_3-\alpha_3\beta_2)=0\\
			k_1\alpha_2+k_2\beta_2+2k_3(\alpha_3\beta_1-\alpha_1\beta_3)=0\\
			k_1\alpha_3+k_2\beta_3+2k_3(\alpha_1\beta_2-\alpha_2\beta_1)=0.\\
		\end{aligned}\right.$$
		The determinant of the above linear system of equations is
		$$\begin{vmatrix}
			
			\alpha_1 & \beta_1 & 2(\alpha_2\beta_3-\alpha_3\beta_2)\\
			
			\alpha_2 & \beta_2 & 2(\alpha_3\beta_1-\alpha_1\beta_3)\\
			
			\alpha_3 & \beta_3 & 2(\alpha_1\beta_2-\alpha_2\beta_1)\\
			
		\end{vmatrix}=2(\alpha_2\beta_3-\alpha_3\beta_2)^2+2(\alpha_3\beta_1-\alpha_1\beta_3)^2+2(\alpha_1\beta_2-\alpha_2\beta_1)^2.$$
		Note that $\alpha$, $\beta$ are linearly independent, so $\alpha\neq 0$, $\beta\neq 0$ and there's not a nonzero real number $k\in\mathbb{R}$ such that $\alpha=k\beta$, which means $\alpha_2\beta_3-\alpha_3\beta_2,\alpha_3\beta_1-\alpha_1\beta_3,\alpha_1\beta_2-\alpha_2\beta_1$ cannot be all zero, so the determinant is greater than zero, which implies $k_1=k_2=k_3=0$, so $\alpha,\beta,[\alpha,\beta]$ are linearly independent and they form a basis of the three-dimensional linear space $\mathfrak{su}(2)$.
	\end{proof}

\begin{lemma}\label{xxx}Let $\zeta\in\mathfrak{su}(2)\otimes\Lambda^{1}\mathbb{R}^4$, $\nu\in\Lambda^{2,+}\mathbb{R}^4$ and $\zeta\centerdot\nu=\zeta$, then $\zeta=0$.
\end{lemma}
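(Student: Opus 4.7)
The proof reduces pointwise to a purely exterior-algebra statement on $\R^4$. Since the product $\centerdot$ acts only on the form factors while the $\mathfrak{su}(2)$-values are passive, writing $\zeta=\sum_I \eta_I\otimes\alpha_I$ in a basis $\{\eta_I\}$ of $\mathfrak{su}(2)$ decouples the hypothesis $\zeta\centerdot\nu=\zeta$ into the system $\alpha_I\centerdot\nu=\alpha_I$ in $\Lambda^1\R^4$, for each $I$. It therefore suffices to prove: for every $\nu\in\Lambda^{2,+}\R^4$ and $\alpha\in\Lambda^1\R^4$, the identity $\alpha\centerdot\nu=\alpha$ forces $\alpha=0$.

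Next, I would compute the linear map $T_\nu:\alpha\mapsto\alpha\centerdot\nu$ in coordinates. Writing $\alpha=\sum_i\alpha_i e^i$ and $\nu=\sum_{i<j}\nu_{ij}\,e^i\wedge e^j$ in the standard orthonormal basis of $(\R^4)^*$, the definition from the appendix gives
$$\alpha\centerdot\nu=\sum_i(\iota_{e_i}\alpha)\wedge(\iota_{e_i}\nu)=\sum_j\Bigl(\sum_i \alpha_i\nu_{ij}\Bigr)e^j,$$
with the convention $\nu_{ji}=-\nu_{ij}$. Hence $T_\nu$ is the linear endomorphism of $\Lambda^1\R^4\cong\R^4$ represented, on column vectors, by $-N$, where $N=(\nu_{ij})$ is the skew-symmetric matrix of $\nu$.

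The key step is a direct computation of $T_\nu^2$ for $\nu$ self-dual. Expanding
$$\nu=a(e^1\wedge e^2+e^3\wedge e^4)+b(e^1\wedge e^3-e^2\wedge e^4)+c(e^1\wedge e^4+e^2\wedge e^3)$$
in the standard basis of $\Lambda^{2,+}\R^4$ and squaring the resulting $4\times 4$ matrix yields
$$T_\nu^2=-(a^2+b^2+c^2)\,\mathrm{Id}_{\Lambda^1\R^4}.$$
Geometrically, this is the familiar fact that self-dual $2$-forms act on $\R^4$ by interior contraction as the imaginary quaternions. Applying $T_\nu$ to both sides of $\alpha=T_\nu\alpha$ and using this identity gives $\alpha=T_\nu^2\alpha=-(a^2+b^2+c^2)\alpha$, whence $(1+a^2+b^2+c^2)\alpha=0$, forcing $\alpha=0$. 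Running this for every $I$ yields $\zeta=0$.

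The only mildly nontrivial step is the matrix identity $T_\nu^2=-(a^2+b^2+c^2)\,I$. It is routine linear algebra, but the signs inherited from the self-dual basis must be tracked carefully; once this is in hand, the rest is bookkeeping and the single line $(1+|\nu|^2)\alpha=0$ closes the argument uniformly, without separating the trivial case $\nu=0$.
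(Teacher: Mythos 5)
Your proof is correct and takes essentially the same approach as the paper: reduce componentwise to the scalar claim $\alpha\centerdot\nu=\alpha\Rightarrow\alpha=0$ on $\Lambda^1\R^4$, fix the standard oriented orthonormal basis and the corresponding self-dual basis for $\Lambda^{2,+}\R^4$, and settle it by $4\times 4$ linear algebra. The only difference is the closing step: the paper writes out the homogeneous system $(\mathrm{Id}-T_\nu)\alpha=0$ and computes its determinant to be $(1+\nu_1^2+\nu_2^2+\nu_3^2)^2>0$, whereas you use the operator identity $T_\nu^2=-|\nu|^2\,\mathrm{Id}$ (the quaternionic action of self-dual two-forms on $\R^4$) to get $(1+|\nu|^2)\alpha=0$ directly; these are two ways of expressing the same fact about the matrix $\mathrm{Id}+N$, and yours is the more structural of the two.
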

\begin{proof}
	The equality $\zeta\centerdot\nu=\zeta$ can be reduced to the same form of real coefficients, so to prove the lemma, we can assume $\zeta\in\Lambda^{1}\mathbb{R}^4$.
	
	Choose an oriented orthonormal basis $\{e^1,e^2,e^3,e^4\}$ for $\mathbb{R}^4$, let $\zeta=\zeta_1e^1+\zeta_2e^2+\zeta_3e^3+\zeta_4e^4$ and $\nu=\nu_1(e^1\wedge e^2+e^3\wedge e^4)+\nu_2(e^1\wedge e^3+e^4\wedge e^2)+\nu_3(e^1\wedge e^4+e^2\wedge e^3)$, $\zeta_i,\nu_j\in\R$, $i=1,2,3,4$, $j=1,2,3$. Then $\zeta\centerdot\nu=\zeta$ implies
	\begin{align*}
		\zeta_1e^1+\zeta_2e^2+\zeta_3e^3+\zeta_4e^4=&(\zeta_1e^1+\zeta_2e^2+\zeta_3e^3+\zeta_4e^4)\centerdot(\nu_1(e^1\wedge e^2+e^3\wedge e^4)\\
		&+\nu_2(e^1\wedge e^3+e^4\wedge e^2)+\nu_3(e^1\wedge e^4+e^2\wedge e^3))\\
		=&\zeta_1(\nu_1e^2+\nu_2e^3+\nu_3e^4)+\zeta_2(-\nu_1e^1-\nu_2e^4+\nu_3e^3)+\zeta_3(\nu_1e^4-\nu_2e^1-\nu_3e^2)\\
		&+\zeta_4(-\nu_1e^3+\nu_2e^2-\nu_3e^1)\\
		=&(-\zeta_2\nu_1-\zeta_3\nu_2-\zeta_4\nu_3)e^1+(\zeta_1\nu_1-\zeta_3\nu_3+\zeta_4\nu_2)e^2+(\zeta_1\nu_2+\zeta_2\nu_3-\zeta_4\nu_1)e^3\\
		&+(\zeta_1\nu_3-\zeta_2\nu_2+\zeta_3\nu_1)e^4
	\end{align*}
Compare the coefficients of $\{e^1,e^2,e^3,e^4\}$ on both sides, we have
$$\left\{
\begin{aligned}
	\zeta_1+\zeta_2\nu_1+\zeta_3\nu_2+\zeta_4\nu_3&=0 \\			
	-\zeta_1\nu_1+\zeta_2+\zeta_3\nu_3-\zeta_4\nu_2&=0 \\	
	-\zeta_1\nu_2-\zeta_2\nu_3+\zeta_3+\zeta_4\nu_1&=0 \\	
	-\zeta_1\nu_3+\zeta_2\nu_2-\zeta_3\nu_1+\zeta_4&=0 	
\end{aligned}
\right.$$
Consider $\zeta_1,\zeta_2,\zeta_3,\zeta_4$ as unknowns, the determinant of the above system of linear equations is
$$		D_{\omega_1}=	\left|\begin{array}{cccc}
	1 & \nu_1 & \nu_2 & \nu_3\\
	-\nu_1 & 1 & \nu_3 & -\nu_2\\
	-\nu_2 & -\nu_3 & 1 & \nu_1\\
	-\nu_3 & \nu_2 & -\nu_1 & 1
\end{array}\right|=\left(1+\nu_1^2+\nu_2^2+\nu_3^2\right)^2>0,$$
hence $\zeta_1,\zeta_2,\zeta_3,\zeta_4$ are all zero and $\zeta=0$.
\end{proof}

	\begin{lemma}\label{xxy}Let $B\in\mathfrak{su}(2)\otimes\Lambda^{2,+}\mathbb{R}^4$, $C\in\mathfrak{su}(2)$ and $[B,C]\not=0$, $0\not=\theta\in\Lambda^{1}\mathbb{R}^4$, then $(B+[B,C])\centerdot\theta+C\otimes\theta$ is rank 3.
	\end{lemma}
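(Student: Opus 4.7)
The plan is to prove this by an explicit computation in an adapted coordinate frame, in the spirit of the proofs of Lemmas~\ref{xx} and \ref{xxx}. The statement is invariant under the $SU(2)$-conjugation action on $\mathfrak{su}(2)$ and the $SO(4)$-action on $\mathbb{R}^4$, both of which preserve $\centerdot$, the Lie bracket, and the notion of rank. Since $[B,C]\neq 0$ forces $C\neq 0$, I would first pick a basis $\{e_1,e_2,e_3\}$ of $\mathfrak{su}(2)$ with cyclic relations $[e_1,e_2]=2e_3$, $[e_2,e_3]=2e_1$, $[e_3,e_1]=2e_2$, and arrange $C=c\,e_3$ with $c>0$. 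Using $\theta\neq 0$, I would similarly rotate the orthonormal basis $\{e^1,e^2,e^3,e^4\}$ of $(\mathbb{R}^4)^*$ so that $\theta=t\,e^1$ with $t>0$.

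With $B=\sum_{i=1}^{3}\xi_i\otimes\omega_i$ expanded in the standard basis $\omega_1=e^1\wedge e^2+e^3\wedge e^4$, $\omega_2=e^1\wedge e^3+e^4\wedge e^2$, $\omega_3=e^1\wedge e^4+e^2\wedge e^3$ of $\Lambda^{2,+}\mathbb{R}^4$, a direct computation from the definition of $\centerdot$ gives $\omega_i\centerdot e^1=-e^{i+1}$ for $i=1,2,3$, and hence
\[
(B+[B,C])\centerdot\theta+C\otimes\theta = t\bigl(C\otimes e^1-\xi_1'\otimes e^2-\xi_2'\otimes e^3-\xi_3'\otimes e^4\bigr),
\]
where $\xi_i':=\xi_i+[\xi_i,C]$. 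Viewed as a $3\times 4$ matrix in the bases $\{e_1,e_2,e_3\}$ and $\{e^1,e^2,e^3,e^4\}$, its first column is $(0,0,tc)^{T}$ while the remaining columns are $-t$ times the coordinates of the $\xi_i'$. Thus rank~$3$ is equivalent to the projections of $\xi_1',\xi_2',\xi_3'$ to $C^{\perp}=\mathrm{span}(e_1,e_2)$ spanning all of $C^{\perp}$.

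Using $[e_1,e_3]=-2e_2$ and $[e_2,e_3]=2e_1$, one verifies that the linear endomorphism $\eta\mapsto\eta+[\eta,C]$ preserves $C^{\perp}$ and acts on it by the matrix $\bigl(\begin{smallmatrix}1 & 2c\\ -2c & 1\end{smallmatrix}\bigr)$, with determinant $1+4c^2>0$; hence it is a linear isomorphism. Consequently the projections of the $\xi_i'$ and of the $\xi_i$ to $C^{\perp}$ have the same span, so the problem is reduced to showing that the projections of $\xi_1,\xi_2,\xi_3$ to $C^{\perp}$ span all of $C^{\perp}$. This last reduction is the main obstacle: the hypothesis $[B,C]\neq 0$ only provides at least one non-zero projection, which gives a $1$-dimensional span, and completing the proof will require carefully combining the three components of $B$ in $\Lambda^{2,+}\mathbb{R}^4$ with the bracket identities from Lemma~\ref{xx} to promote this to a full $2$-dimensional span.
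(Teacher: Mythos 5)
Your reduction is clean and correct, and it is in fact more transparent than the paper's argument: after normalizing $\theta=t\,e^1$, $C=c\,e_3$, decomposing $B=\sum_i\xi_i\otimes\omega_i$, and using that $\eta\mapsto\eta+[\eta,C]$ is an isomorphism of $C^\perp$, the rank-$3$ claim becomes exactly the statement that the orthogonal projections $\pi(\xi_1),\pi(\xi_2),\pi(\xi_3)$ of the $\mathfrak{su}(2)$-components of $B$ to $C^\perp$ span the $2$-plane $C^\perp$. The gap you flag at the end is real, but it is worse than something you might hope to close by ``combining the three components with the bracket identities'': the hypothesis $[B,C]\neq 0$ only guarantees that some $\pi(\xi_i)\neq 0$, and nothing forces the span to reach dimension $2$. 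Your own reduction therefore exhibits a counterexample to the lemma as stated. Take $\xi_1=e_1$, $\xi_2=\xi_3=0$ (so $B=e_1\otimes(e^1\wedge e^2+e^3\wedge e^4)$), $C=e_3$, $\theta=e^1$. Then $[B,C]=-2e_2\otimes(e^1\wedge e^2+e^3\wedge e^4)\neq 0$, yet
$(B+[B,C])\centerdot\theta+C\otimes\theta=e_3\otimes e^1-(e_1-2e_2)\otimes e^2$,
a $3\times4$ matrix of rank $2$.

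It is worth noting that the paper's own proof of Lemma~\ref{xxy} silently makes the same unjustified step. It normalizes $B$ by a singular value decomposition, writes the image as $\eta_1\otimes\omega_1+\eta_2\otimes\omega_2+\eta_3\otimes\omega_3$, and shows by determinant computations that each $\omega_i\neq 0$; it then infers rank~$3$ from ``if the rank is not $3$, then one of $\omega_1,\omega_2,\omega_3$ is zero.'' But three nonzero $1$-forms on $\mathbb{R}^4$ can be linearly dependent, and in the example above the paper's formulas give $\omega_1=-e^2$, $\omega_2=2e^2$, $\omega_3=e^1$, all nonzero but only spanning a $2$-plane. So the step you could not complete is not a defect of your approach: the lemma needs either a stronger hypothesis (for instance a full-rank condition on $B$ in $\mathfrak{su}(2)\otimes\Lambda^{2,+}\mathbb{R}^4$, or genericity in $\theta$) or a weaker conclusion, and the downstream transversality argument in Lemma~\ref{xz} would have to be revisited accordingly.
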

	\begin{proof}
		According to the singular value decomposition for $\mathfrak{su}(2)\otimes\Lambda^{2,+}\mathbb{R}^4$ in  \cite[\S 4.1.1]{Ma}, there exist oriented orthonormal basis $\{e^1,e^2,e^3,e^4\}$ for $\mathbb{R}^4$ and a basis $\{\eta_1,\eta_2,\eta_3\}$ for $\mathfrak{su}(2)$   such that $[\eta_1,\eta_2]=2\eta_3$ and cyclic permutations, $B=B_{1}\eta_1\otimes(e^1\wedge e^2+e^3\wedge e^4)+B_{2}\eta_2\otimes(e^1\wedge e^3+e^4\wedge e^2)+B_{3}\eta_3\otimes(e^1\wedge e^4+e^2\wedge e^3)$, 
		where $B_{1},B_{2},B_{3}\in\R$. Let $\theta=\theta_1e^1+\theta_2e^2+\theta_3e^3+\theta_4e^4$ where $\theta_k\in\R$, $k=1,2,3,4$. $C=C_1\eta_1+C_2\eta_2+C_3\eta_3$, $C_{1},C_{2},C_{3}\in\R$. Then $0\not=[B,C]=[B_{1}\eta_1,C]\otimes(e^1\wedge e^2+e^3\wedge e^4)+[B_{2}\eta_2,C]\otimes(e^1\wedge e^3+e^4\wedge e^2)+[B_{3}\eta_3,C]\otimes(e^1\wedge e^4+e^2\wedge e^3)$ implies one of $[B_1\eta_1,C],[B_2\eta_2,C],[B_3\eta_3,C]$ is nonzero. Without loss of generality, let $0\not=[B_1\eta_1,C]=2B_1C_2\eta_3-2B_1C_3\eta_2$, which means $B_1\neq 0$ and $C_2^2+C_3^2>0$. Note that
		\begin{align*}
			&(B+[B,C])\centerdot\theta+C\otimes\theta\\
			=&-(B_1\eta_1+[B_1\eta_1,C_1\eta_1+C_2\eta_2+C_3\eta_3])\theta_1\otimes e^2-(B_2\eta_2+[B_2\eta_2,C_1\eta_1+C_2\eta_2+C_3\eta_3])\theta_1\otimes e^3\\
			&-(B_3\eta_3+[B_3\eta_3,C_1\eta_1+C_2\eta_2+C_3\eta_3])\theta_1\otimes e^4+(B_1\eta_1+[B_1\eta_1,C_1\eta_1+C_2\eta_2+C_3\eta_3])\theta_2\otimes e^1\\
			&+(B_2\eta_2+[B_2\eta_2,C_1\eta_1+C_2\eta_2+C_3\eta_3])\theta_2\otimes e^4
			-(B_3\eta_3+[B_3\eta_3,C_1\eta_1+C_2\eta_2+C_3\eta_3])\theta_2\otimes e^3\\
			&-(B_1\eta_1+[B_1\eta_1,C_1\eta_1+C_2\eta_2+C_3\eta_3])\theta_3\otimes e^4
			+(B_2\eta_2+[B_2\eta_2,C_1\eta_1+C_2\eta_2+C_3\eta_3])\theta_3\otimes e^1\\
			&+(B_3\eta_3+[B_3\eta_3,C_1\eta_1+C_2\eta_2+C_3\eta_3])\theta_3\otimes e^2
			+(B_1\eta_1+[B_1\eta_1,C_1\eta_1+C_2\eta_2+C_3\eta_3])\theta_4\otimes e^3\\
			&-(B_2\eta_2+[B_2\eta_2,C_1\eta_1+C_2\eta_2+C_3\eta_3])\theta_4\otimes e^2+(B_3\eta_3+[B_3\eta_3,C_1\eta_1+C_2\eta_2+C_3\eta_3])\theta_4\otimes e^1\\
			&+(C_1\eta_1+C_2\eta_2+C_3\eta_3)\theta_1\otimes e^1+(C_1\eta_1+C_2\eta_2+C_3\eta_3)\theta_2\otimes e^2+(C_1\eta_1+C_2\eta_2+C_3\eta_3)\theta_3\otimes e^3\\
			&+(C_1\eta_1+C_2\eta_2+C_3\eta_3)\theta_4\otimes e^4.\\
			=&-B_1(\eta_1+2C_2\eta_3-2C_3\eta_2)\theta_1\otimes e^2-B_2(\eta_2-2C_1\eta_3+2C_3\eta_1)\theta_1\otimes e^3-B_3(\eta_3+2C_1\eta_2-2C_2\eta_1)\theta_1\otimes e^4\\
			&+B_1(\eta_1+2C_2\eta_3-2C_3\eta_2)\theta_2\otimes e^1+B_2(\eta_2-2C_1\eta_3+2C_3\eta_1)\theta_2\otimes e^4-B_3(\eta_3+2C_1\eta_2-2C_2\eta_1)\theta_2\otimes e^3\\
			&-B_1(\eta_1+2C_2\eta_3-2C_3\eta_2)\theta_3\otimes e^4+B_2(\eta_2-2C_1\eta_3+2C_3\eta_1)\theta_3\otimes e^1+B_3(\eta_3+2C_1\eta_2-2C_2\eta_1)\theta_3\otimes e^2\\
			&+B_1(\eta_1+2C_2\eta_3-2C_3\eta_2)\theta_4\otimes e^3-B_2(\eta_2-2C_1\eta_3+2C_3\eta_1)\theta_4\otimes e^2+B_3(\eta_3+2C_1\eta_2-2C_2\eta_1)\theta_4\otimes e^1\\
			&+(C_1\eta_1+C_2\eta_2+C_3\eta_3)\theta_1\otimes e^1+(C_1\eta_1+C_2\eta_2+C_3\eta_3)\theta_2\otimes e^2+(C_1\eta_1+C_2\eta_2+C_3\eta_3)\theta_3\otimes e^3\\
			&+(C_1\eta_1+C_2\eta_2+C_3\eta_3)\theta_4\otimes e^4.\\
			=&\eta_1\big((B_1\theta_2+2B_2C_3\theta_3-2B_3C_2\theta_4+C_1\theta_1)\otimes e^1+(-B_1\theta_1-2B_3C_2\theta_3-2B_2C_3\theta_4+C_1\theta_2)\otimes e^2\\
			&+(B_1\theta_4-2B_2C_3\theta_1+2B_3C_2\theta_2+C_1\theta_3)\otimes e^3+(-B_1\theta_3+2B_3C_2\theta_1+2B_2C_3\theta_2+C_1\theta_4)\otimes e^4\big)\\
			&\eta_2\big((B_2\theta_3-2B_1C_3\theta_2+2B_3C_1\theta_4+C_2\theta_1)\otimes e^1+(-B_2\theta_4+2B_1C_3\theta_1+2B_3C_1\theta_3+C_2\theta_2)\otimes e^2\\
			&+(-B_2\theta_1-2B_1C_3\theta_4-2B_3C_1\theta_2+C_2\theta_3)\otimes e^3+(B_2\theta_2+2B_1C_3\theta_3-2B_3C_1\theta_1+C_2\theta_4)\otimes e^4\big)\\
			&\eta_3\big((B_3\theta_4-2B_2C_1\theta_3+2B_1C_2\theta_2+C_3\theta_1)\otimes e^1+(B_3\theta_3+2B_2C_1\theta_4-2B_1C_2\theta_1+C_3\theta_2)\otimes e^2\\
			&+(-B_3\theta_2+2B_2C_1\theta_1+2B_1C_2\theta_4+C_3\theta_3)\otimes e^3+(-B_3\theta_1-2B_2C_1\theta_2-2B_1C_2\theta_3+C_3\theta_4)\otimes e^4\big)\\
			:=&\eta_1\otimes\omega_1+\eta_2\otimes\omega_2+\eta_3\otimes\omega_3
		\end{align*}
		for some $\omega_1,\omega_2,\omega_3\in\Lambda^{1}\mathbb{R}^4$.  If  the rank of $(B+[B,C])\centerdot\theta+C\otimes\theta$ is not 3, then one of $\omega_1,\omega_2,\omega_3$ is zero. $\omega_1=0$ means
		$$\left\{
		\begin{aligned}
			B_1\theta_2+2B_2C_3\theta_3-2B_3C_2\theta_4+C_1\theta_1&=0 \\			-B_1\theta_1-2B_3C_2\theta_3-2B_2C_3\theta_4+C_1\theta_2&=0 \\
			B_1\theta_4-2B_2C_3\theta_1+2B_3C_2\theta_2+C_1\theta_3&=0 \\
			-B_1\theta_3+2B_3C_2\theta_1+2B_2C_3\theta_2+C_1\theta_4&=0 
		\end{aligned}
		\right.$$
		Consider $\theta_1,\theta_2,\theta_3,\theta_4$ as unknowns and note that $B_1\neq 0$, the determinant of the above system of linear equations is 
		$$		D_{\omega_1}=	\left|\begin{array}{cccc}
				C_1 & B_1 & 2B_2C_3 & -2B_3C_2\\
				-B_1 & C_1 & -2B_3C_2 & -2B_2C_3\\
				-2B_2C_3 & 2B_3C_2 & C_1 & B_1\\
				2B_3C_2 & 2B_2C_3 & -B_1 & C_1
			\end{array}\right|=\left(B_1^2+C_1^2+4 B_2^2 C_3^2+4 B_3^2 C_2^2\right)^2>0,$$
		contradicts to $\theta\neq 0$, so $\omega_1\not=0$. Similarly, the corresponding determinant of  $\omega_2$ and $\omega_3$ are
		$$D_{\omega_2}=\left(B_2^2+C_2^2+4 B_3^2 C_1^2+4 B_1^2 C_3^2\right)^2$$ and $$D_{\omega_3}=\left(B_3^2+C_3^2+4 B_1^2 C_2^2+4 B_2^2 C_1^2\right)^2.$$
		If $C_2=0$, then $C_3\neq 0$, $4B_1^2C_3^2>0$, we have $D_{\omega_2}\geq (4B_1^2C_3^2)^2>0$ and $D_{\omega_2}\geq (C_3^2)^2>0$, which means $\omega_2\not=0$ and $\omega_3\not=0$, so $(B+[B,C])\centerdot\theta+C\otimes\theta$ is rank 3. The case of $C_3 = 0$ can be discussed similarly.
		
		Hence we must have $(B+[B,C])\centerdot\theta+C\otimes\theta$  rank 3.
	\end{proof}


\begin{thebibliography}{99}
		
		\bibitem{Ag}
		S. Agmon and L. Nirenberg, Lower bounds and uniqueness theorems for solutions of differential
		equations in Hilbert spaces, Comm. Pure Appl. Math. 20 (1967) 207–229.
		\bibitem{Ar}
		N. Aronszajn. (1956). A unique continuation theorem for solutions of elliptic partial differential equations or inequalities of second order. Mathematische Nachrichten, 108(1), 167–178.
		\bibitem{D1}
		S. K. Donaldson. Polynomial invariants for smooth four-manifolds[J]. Topology, 1990, 29(3):257-315.
		\bibitem{D2}
		S. K. Donaldson and P. B. Kronheimer, The Geometry of Four-Manifolds, Oxford University Press, New York, 1990.
		\bibitem{Fe}
		P. M. N. Feehan, Generic metrics, irreducible rank-one PU(2) monopoles, and transversality, Comm. Anal. Geom. 8 (2000), 905–967.
		\bibitem{FL}
		P. M. N. Feehan and T. G. Leness. PU(2) Monopoles, I: Regularity, Uhlenbeck Compactness, and Transversality[J]. Journal of Differential Geometry, 1997, 49(2):265-410.
		\bibitem{FU}
		D. Freed and K. K. Uhlenbeck, Instantons and Four-Manifolds, 2nd ed., Springer, New York,
		1991.
		\bibitem{JTU}
		D. Joyce, Y. Tanaka and M. Upmeier, On orientations for gauge-theoretic moduli spaces, Adv.
		Math., 2020, 362:106957.
		\bibitem{Kaz}
		J. L. Kazdan. Unique continuation in geometry[J]. Communications on Pure \& Applied Mathematics, 1988, 41(5):667-681.
		\bibitem{Ma}
		B. A. Mares. Some analytic aspects of   Vafa-Witten twisted N = 4 supersymmetric Yang-Mills theory[J]. Massachusetts Institute of Technology, 2010.
		\bibitem{Mor}
		J. W. Morgan, The Seiberg-Witten equations and applications to the topology of smooth four-manifolds /[M]. Princeton University Press, 1996.
		\bibitem{SW1}
		N. Seiberg, E. Witten. Monopoles, Duality and Chiral Symmetry Breaking in N=2 Supersymmetric QCD[J]. Nuclear Physics B, 1994, 431(3):484-550.
		\bibitem{Tan}
		Y. Tanaka, A perturbation and generic smoothness of the Vafa-Witten moduli spaces on closed symplectic four-manifolds.
		Glasg. Math. J. 61 (2019), no. 2, 471–486.
		\bibitem{Tan2}
		Y. Tanaka. Some boundedness properties of solutions to the Vafa-Witten equations on closed four-manifolds[J]. Mathematics, 2013, 68(4):1203-1225.
		\bibitem{VW}
		C. Vafa and E. Witten. A strong coupling test of S-duality[J]. Nuclear Physics B, 1994, 431(s1-2):3-77.
		\bibitem{W}
		E. Witten. Monopoles and Four-Manifolds[J]. Mathematical Research Letters, 1994, 1(6):769.
		
		
		
	\end{thebibliography}
\end{document}